\begin{document}

\newtheorem{prop}{Proposition}[section]
\newtheorem{thm}[prop]{Theorem}
\newtheorem{lemma}[prop]{Lemma}
\newtheorem{cor}[prop]{Corollary}
\newtheorem{Question}[prop]{Question}
\theoremstyle{definition}
\newtheorem{Example}[prop]{Example}
\newtheorem{Examples}[prop]{Examples}
\newtheorem{Remark}[prop]{Remark}

\newcommand{\extto}{\xrightarrow}
\newcommand{\dpp}{\prime\prime}
\newcommand{\br}{{\bf r}}
\newcommand{\bx}{$_{\fbox{}}$\vskip .2in}
\newcommand{\Gr}{\operatorname{Gr}\nolimits}
\newcommand{\gr}{\operatorname{gr}\nolimits}
\newcommand{\Mod}{\operatorname{Mod}\nolimits}
\newcommand{\md}{\operatorname{mod}}
\newcommand{\End}{\mbox{End}}
\newcommand{\Hom}{\mbox{Hom}}
\renewcommand{\Im}{\operatorname{Im}\nolimits}
\newcommand{\pd}{\mbox{pd}}
\newcommand{\Ker}{\operatorname{Ker}\nolimits}
\newcommand{\Coker}{\mbox{Coker}}
\newcommand{\coker}{\mbox{coker}}
\newcommand{\Tor}{\mbox{Tor}}
\renewcommand{\dim}{\mbox{dim}}
\newcommand{\gldim}{\mbox{gl.dim}}
\newcommand{\Ext}{\operatorname{Ext}\nolimits}
\newcommand{\HH}{\operatorname{HH}}
\newcommand{\op}{^{\operatorname{op}}}
\newcommand{\ev}{{\operatorname{ev}}}
\newcommand{\pr}{^{\prime}}
\newcommand{\f}{\operatorname{fin}}
\newcommand{\Sy}{\operatorname{syz}}
\newcommand{\semi}{\mathbin{\vcenter{\hbox{$\scriptscriptstyle|$}}
\;\!\!\!\times }}
\newcommand{\fralg}{K\!\!<\!\!x_1,\dots,x_n\!\!>}
\newcommand{\mo}{\mathfrak{o}}
\newcommand{\mt}{\mathfrak{t}}
\newcommand{\Span}{\operatorname{Span}\nolimits}
\newcommand{\cN}{\mathcal N}
\newcommand{\rrad}{\mathfrak{r}}

\title{On the diagonal subalgebra of an Ext algebra}

\author{E. L.\ Green}
\address{Department of Mathematics\\
Virginia Tech\\
Blacksburg, VA 24061-0123\\ USA}
\email{green@math.vt.edu}
\author{N. Snashall}
\thanks{The second author was supported by an LMS scheme 4 grant.}
\address{Department of Mathematics\\
University of Leicester\\
Leicester LE1 7RH\\ UK}
\email{njs5@le.ac.uk}
\author{\O. Solberg}
\address{Department of Mathematical Sciences\\
NTNU\\
N-7491 Trondheim\\ Norway}
\email{oyvinso@math.ntnu.no}
\thanks{The third author is supported by ``Clusters, combinatorics and computations in algebra'' grant no.\ 23100
  from the Research Council of Norway}
\author{D. Zacharia}
\thanks{The fourth author was partially supported by the NSA grant
  H98230-11-1-0152.}
\address{Department of Mathematics\\Syracuse University\\Syracuse, NY
13244\\USA}
\email{zacharia@syr.edu}

\subjclass[2010]{Primary 16E30. Secondary 16E40, 16S37}

\keywords{Koszul, linear modules, Ext-algebra, Hochschild cohomology}
%\dedicatory{}

\begin{abstract} Let $R$ be a Koszul algebra over a field $\Bbbk$ and
  $M$ be a linear $R$-module.  We study a graded subalgebra $\Delta_M$
  of the Ext-algebra $\Ext_R^*(M,M)$ called the diagonal subalgebra
  and its properties.  Applications to the Hochschild cohomology ring
  of $R$ and to periodicity of linear modules are given. Viewing $R$
  as a linear module over its enveloping algebra, we also show that
  $\Delta_R$ is isomorphic to the graded center of the Koszul dual of
  $R$.
\end{abstract}

\date{\today}

\maketitle
\section{Introduction}

%\subsection*{Acknowledgments.}

Let $\Bbbk$ be a field and let $R=R_0\oplus R_1\oplus\cdots$ be a
Koszul $\Bbbk$-algebra, with $R_0=\Bbbk\times\cdots\times\Bbbk$ and
with graded radical $\mathfrak{ r}=\bigoplus_{i>0} R_i$. Let $M$ be a
linear $R$-module and let $\Ext^*_R(M,M)$ be its Ext (or Yoneda)
algebra.  It is well-known that $\Ext^*_R(M,M)$ has a bigrading
induced from the homological grading and from the internal grading of
$M$.  The main goal of this paper is to study a particular subalgebra
$\Delta_M$, of $\Ext^*_R(M,M)$ called the \emph{diagonal subalgebra}.
Namely, this subalgebra will be generated by all the elements of
$\Ext^*_R(M,M)$ of bidegree $(i,-i)$ for $i\geqslant 0$.  We study the
graded algebra decomposition
\[\Ext^*_R(M,M)= \Delta_M\oplus \cN_M\]
of the Ext-algebra given in Proposition \ref{decomposition},
so viewing $R$ as a linear module over its enveloping
  algebra, we obtain
  \[ \HH^*(R)=\Delta_R\oplus \mathcal N_R,\]
  where $\HH^*(R)$ denotes the Hochschild cohomology ring of $R$.  We
  show that
\begin{enumerate}
\item If $M$ is a linear $R$-module of finite graded length, then the
  two sided ideal $\mathcal N_M$ consists of nilpotent elements.
\item If $Z_{\gr}(\mathcal E_R)$ denotes the graded center of the
  Koszul dual of $R$, that is, of $\Ext^*_R(R/\mathfrak r,R/\mathfrak
  r)$, then \[\Delta_R\cong Z_{\gr}(\mathcal E_R).\]
\item We use properties of $\Delta_M$ to obtain a characterization of
  the linear $R$-modules $M$ that are eventually periodic over a Koszul
  algebra.
\item In addition, we prove in the last section that if $R$ is a
  selfinjective algebra over an algebraically closed field and
  satisfies the {\bf Fg} finiteness condition, then, the property that
  all the simple modules are periodic is equivalent to the algebra
  being periodic when viewed as a module over its enveloping algebra.
\end{enumerate}

\smallskip

\noindent Throughout this paper, we only consider graded $R$-modules
that have graded projective resolutions, such that each projective
module occurring in the resolution is finitely generated. It is
well-known that minimal graded resolutions exist in our situation. We
start by recalling some of the basic notation that will be used in
this paper.  Let $M$ and $N$ be two graded $R$-modules. By a
homomorphism of degree $i$ we mean a module homomorphism from $M$ to
$N$ taking $M_k$ into $N_{i+k}$ for each integer $k$. By abuse of
language we will use the term ``graded homomorphism" for degree zero
homomorphisms. Let $M$ be a finitely generated graded $R$-module and
let $m\in\mathbb Z$. Then $M(m)$ will denote the graded shift of the
module $M$, that is the graded module whose $i$-th graded piece is
$M(m)_i=M_{i+m}$. Similarly, if $(P^{\bullet}_M, d^{\bullet}_M)$ is a
graded projective resolution of $M$, then its graded shift
$(P^{\bullet}_M(m), d^{\bullet}_M(m))$ is defined in the obvious way.

\smallskip

\noindent Denote by $\Gamma$, the Ext-algebra of $M$, that
is $$\Gamma=\Ext_R^*(M,M)=\bigoplus_{n\geqslant 0}\Ext_R^n(M,M).$$
\noindent It is well-known that $\Gamma$ is an associative
$\Bbbk$-algebra with the multiplication given by the Yoneda
product. For the convenience of the reader we recall one way of
looking at this product. Let
$$P_M^\bullet\space\ \  \cdots \extto{}
P^{n+1}_M\extto{d_M^{n+1}} P^n_M\extto{d_M^n} \cdots \extto{}
P^1_M\extto{d_M^1} P^0_M \extto{d_M^0} M\to 0$$ be a minimal graded projective
resolution of $M$.  Let $\eta$ be an element of $\Ext_R^n(M,M)$. We
may represent it as a homomorphism $\eta\colon P_M^n\rightarrow M$
having the property that the composition $\eta d_M^{n+1}=0$. Now let
$\xi\colon P_M^m\rightarrow M$ represent an element of
$\Ext_R^m(M,M)$.  For each $i = 0, \dots , m$, we have liftings
$l^i\colon P_M^{n+i}\rightarrow P_M^i$ of $\eta$ and we obtain the
following commutative diagram with exact rows:
$$
\xymatrix{
\cdots\ar[r]& P_M^{n+m} \ar[r]^{d_M^{m+n}} \ar[d]^{l^m} & \cdots\ar[r] & P_M^{n+1} \ar[r]^{d_M^{n+1}}
\ar[d]^{l^1} & P_M^n \ar[d]^{l^0}\ar[dr]^{\eta} & \\\cdots\ar[r]&  P_M^m \ar[r]^{\phantom{xx}d_M^m}
\ar[d]^{\xi}& \cdots\ar[r] & P_M^1 \ar[r]^{\phantom{x}d_M^1} & P_M^0 \ar[r]^{d_M^0} & M
\\ &M& & & & }
$$
\noindent It is well-known that the composition $\xi l^m$ represents
the element $\xi\eta$ of $\Ext_R^{m+n}(M,M)$ and that this
multiplication does not depend on the choice of the liftings $l^i$.
Note also that if $\eta$ is a homomorphism of degree $j$, then we may
assume without loss of generality that each lifting $l^i$ has degree
$j$ as well.

\medskip

\noindent The Ext-algebra of $M$ is a bigraded $\Bbbk$-algebra in the
following way: For each integer $i$, let $\Hom_R(M,M)_i$ denote the
space of all graded homomorphisms of degree $i$ from $M$ to $M$. Then
$\End_R(M,M)$ is a graded $R_0$-module by putting
$\End_R(M,M)=\bigoplus_{i\in\mathbb Z}\Hom_R(M,M)_i$.  Passing to
derived functors, for each $n\geqslant 0$, $\Ext_R^n(M,M)$ has a
grading induced from the grading on $\End_R(M,M)$.  More precisely,
let
\[P_M^\bullet\space\ \  \cdots \extto{}
P_M^{n+1}\extto{d_M^{n+1}} P_M^n\extto{d_M^n} \cdots \extto{}
P_M^1\extto{d_M^1} P_M^0 \extto{d_M^0} M\to 0\]
be a minimal graded projective resolution of $M$.  If $\eta\colon
P_M^n\to M$ represents an element of $\Ext^n_R(M,M)$, then $\eta$ is a
sum of graded homomorphisms of various degrees. Denoting by
$\Ext^n_R(M,M)_i$ the elements of $\Ext^n_R(M,M)$ represented by
elements of $\Hom_R(P_M^n,M)_i$ we obtain the
bigrading \[\Ext^*_R(M,M)=\bigoplus_{n\geqslant
  0}\bigoplus_{i\in\mathbb Z}\Ext^n_R(M,M)_i.\]
\noindent The reader may easily check that the Yoneda product is compatible
with this bigrading.

\smallskip

\noindent Finally, we remark that most of the results in this paper concern Koszul algebras and linear
modules, and we recall that a graded $R$-module generated in degree
zero is called a \emph{linear module} if it has a graded projective
resolution such that for each $n$, the $n$-th term of the resolution
is generated in degree $n$.  So in the case when $M$ is a linear
module, this implies that for each nonnegative integer $n$, we have
$\Ext_R^n(M,M)=\bigoplus_{i\geqslant -n}\Ext_R^n(M,M)_i$.

\section{The Diagonal subalgebra of an Ext-algebra}\label{sec:two}

Throughout this section $R=R_0\oplus R_1\oplus\cdots$ is a Koszul
$\Bbbk$-algebra, where $R_0= \Bbbk\times\cdots\times\Bbbk$, and $M$ is
a linear $R$-module. The \emph{diagonal subalgebra} of $\Ext^*_R(M,M)$
is defined as \[\Delta_M=\bigoplus_{n\geqslant 0}\Ext^n_R(M,M)_{-n}\]
and we set $\mathcal N_M=\bigoplus_{i>-n}\bigoplus_{n\geqslant
  0}\Ext^n_R(M,M)_i$.  For each nonnegative integer $n$ we also define
$\Delta_M^n=\Ext^n_R(M,M)_{-n}$. Note that the Yoneda product induces
a two-sided $\Delta_M$-module structure on $\mathcal N_M$. It is also
clear that $\cN_M$ is an ideal of $\Ext^*_R(M,M)$.  We start with a
very simple example. A less obvious one is presented later in this
section.

\begin{Example} Let $R$ be a Koszul algebra and let $M$ be a
  semisimple linear module. It is easy to show in this case that
  $\Delta_M=\Ext^*_R(M,M)$.
\end{Example}

\noindent We have the following immediate consequence of the
definition:

\begin{prop}\label{decomposition} Let $R$ be a Koszul algebra and let
  $M$ be a linear $R$-module. Then we have a decomposition
  $\Ext^*_R(M,M)=\Delta_M\oplus \cN_M$ and $\cN_M$ is an ideal of
  $\Ext^*_R(M,M)$.
\end{prop}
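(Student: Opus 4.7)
The plan is to unpack the bigraded decomposition and use the linearity hypothesis. Since $M$ is linear, the excerpt gives us the lower-bound vanishing $\Ext^n_R(M,M) = \bigoplus_{i \geq -n} \Ext^n_R(M,M)_i$. Splitting each homogeneous component $\Ext^n_R(M,M)_i$ according to whether $i = -n$ or $i > -n$ immediately yields the graded vector space decomposition $\Ext^*_R(M,M) = \Delta_M \oplus \cN_M$.

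The only remaining content is that $\cN_M$ is a two-sided ideal. For this I would invoke compatibility of the Yoneda product with the bigrading, which is asserted in the paragraph preceding the proposition: if $\xi \in \Ext^m_R(M,M)_j$ and $\eta \in \Ext^n_R(M,M)_i$, then $\xi\eta \in \Ext^{m+n}_R(M,M)_{i+j}$. Now take $\eta \in \cN_M$ and $\xi \in \Ext^*_R(M,M)$ arbitrary. Write $\xi = \sum \xi_{m,j}$ and $\eta = \sum \eta_{n,i}$ as sums of bihomogeneous elements; it suffices to show each product $\xi_{m,j}\eta_{n,i}$ lies in $\cN_M$. By linearity applied to $\xi_{m,j}$ we have $j \geq -m$, and by assumption $i > -n$ (since $\eta_{n,i}$ is a summand of something in $\cN_M$), so
\[ i + j \;>\; -n - m \;=\; -(m+n), \]
which places $\xi_{m,j}\eta_{n,i}$ in $\cN_M$. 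The same argument on the other side shows $\cN_M$ is a right ideal, completing the proof that $\cN_M$ is a two-sided ideal.

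As a bookkeeping remark, one may observe in passing that $\Delta_M$ is a subalgebra (the product of a diagonal element of bidegree $(m,-m)$ with one of bidegree $(n,-n)$ lives in bidegree $(m+n, -(m+n))$), which is consistent with the subsequent decomposition statements. I would expect no genuine obstacle here: the result really is formal, with the linearity of $M$ providing the single nontrivial input (the inequality $j \geq -m$ for arbitrary homogeneous $\xi$) that turns the naive bidegree bookkeeping into an ideal statement.
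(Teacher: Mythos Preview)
Your proof is correct and is exactly the argument the paper has in mind: the paper states this proposition as ``the following immediate consequence of the definition'' and gives no proof at all, having already noted just before that $\cN_M$ is clearly an ideal. You have simply written out the bidegree bookkeeping that the authors left implicit.
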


\noindent The following general results concern modules of finite
graded length.  Recall that a graded module $M$ has \emph{finite
  graded length $d$}, if, for some integers $a\leqslant b$, $M_a\ne
0$, $M_b\ne 0$, $M_i=0$ for $i<a$ and $i>b$, and $d=b-a+1$.

\begin{lemma}\label{iszero} Let $R$ be a Koszul algebra, let $M$ be a linear $R$-module
  of finite graded length $d$, and let $P_M^n$ be the $n$-th term in a
  minimal graded projective resolution of $M$.  If $\eta\colon
  P_M^n\rightarrow M$ represents an element of $\Ext_R^n(M,M)_s$ and
  $s>-n+d$, then $\eta=0$ in $\Ext_R^n(M,M)$.
\end{lemma}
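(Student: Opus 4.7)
The plan is to prove the stronger statement that $\eta$ is literally the zero homomorphism of graded modules; it then trivially represents the zero element of $\Ext_R^n(M,M)$. The argument is a direct degree count, based on matching the range where $M$ is supported against the generation degree of $P_M^n$ after shifting by $s$.

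First I would pin down the two constraints that flow from the hypotheses. Since $M$ is linear, $M$ is generated in degree $0$, so $M_0 \neq 0$ and $M_i = 0$ for $i < 0$; combined with the definition of finite graded length $d$, this forces $a = 0$ and $b = d-1$, so $M_i = 0$ for all $i \geqslant d$. Linearity also guarantees that the $n$-th term $P_M^n$ of the minimal graded projective resolution is generated in degree $n$, so $(P_M^n)_j = 0$ for $j < n$.

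The degree count is then immediate: a homomorphism of degree $s$ sends $(P_M^n)_j$ into $M_{j+s}$, and for every $j \geqslant n$ the hypothesis $s > -n + d$ gives $j + s \geqslant n + s > d$, so $M_{j+s} = 0$. Hence $\eta$ vanishes on every homogeneous component of $P_M^n$, so $\eta = 0$ as a graded homomorphism. There is no real obstacle here; the only point worth tracking is that linearity together with finite graded length pins $a$ to $0$ exactly, which is what converts ``length $d$'' into the concrete vanishing range $M_i = 0$ for $i \geqslant d$ that powers the inequality.
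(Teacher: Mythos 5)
Your proof is correct and follows essentially the same degree-counting argument as the paper's: $M$ lives in degrees $0,\dots,d-1$, $P_M^n$ is generated in degree at least $n$, so any degree-$s$ map with $s>-n+d$ is the zero homomorphism. You simply spell out the normalization $a=0$, $b=d-1$ that the paper leaves implicit.
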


\begin{proof} Note that $M$ lives in degrees $0,\ldots, d-1$. Since
  the top of $P_M^n$ lies in degree at least $n$, every homomorphism
  $P_M^n\to M$ of degree greater than $-n+d$ is zero so the result
  follows.
\end{proof}

\begin{prop}\label{nilpot} Let $R$ be a Koszul algebra and $M$ a
  linear $R$-module of finite graded length.  Then every element of
  $\cN_M$ is nilpotent.
\end{prop}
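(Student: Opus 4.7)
The plan is to prove the stronger statement that the ideal $\mathcal{N}_M$ is in fact nilpotent, specifically that $\mathcal{N}_M^{d+1}=0$, where $d$ is the graded length of $M$; nilpotency of every individual element then follows at once.

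The key observation is that each homogeneous component of $\mathcal{N}_M$ lies in some $\Ext^n_R(M,M)_i$ with $i>-n$, equivalently with $n+i\geqslant 1$. Since the Yoneda product is compatible with the bigrading, a product of $k$ homogeneous elements drawn from $\mathcal{N}_M$ lies in a single bidegree $(N,I)$ with $N+I\geqslant k$.

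Concretely, I would decompose an arbitrary element $\eta\in\mathcal{N}_M$ as a finite sum $\eta=\sum_{j=1}^{\ell}\eta_j$ of homogeneous components $\eta_j\in\Ext^{n_j}_R(M,M)_{i_j}$ with $n_j+i_j\geqslant 1$, and expand $\eta^k$ as a sum of monomials $\eta_{j_1}\cdots\eta_{j_k}$. Each such monomial lies in bidegree $\bigl(\sum_l n_{j_l},\sum_l i_{j_l}\bigr)$, with $\sum_l(n_{j_l}+i_{j_l})\geqslant k$. As soon as $k\geqslant d+1$, this forces $\sum_l i_{j_l}>-\sum_l n_{j_l}+d$, so Lemma \ref{iszero} kills every such monomial, and therefore $\eta^{d+1}=0$.

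The main subtlety I anticipate is that an inhomogeneous element of $\mathcal{N}_M$ is a sum of pieces that are each (homogeneously) nilpotent, and such sums are not automatically nilpotent in a noncommutative ring. The bigraded bookkeeping above sidesteps this: the vanishing criterion from Lemma \ref{iszero} applies uniformly to \emph{every} product of length at least $d+1$ in $\mathcal{N}_M$, regardless of how the homogeneous components are distributed across the positions, so a single uniform exponent works for the whole ideal.
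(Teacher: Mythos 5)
Your proposal is correct and is essentially the paper's own argument: decompose into bihomogeneous components with $n+i\geqslant 1$, observe that the bidegree of any length-$(d+1)$ product satisfies $I\geqslant -N+d+1$, and invoke Lemma \ref{iszero}. The only cosmetic difference is that you explicitly phrase the conclusion as $\mathcal{N}_M^{d+1}=0$, whereas the paper states it for a single element $x$ and concludes $x^{d+1}=0$; the underlying reasoning is identical.
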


\begin{proof} Assume that the graded length of $M$ is $d$, and let
  $x\in\cN_M$.  Then $x$ can be written as a finite sum
  $\sum_{k\geqslant 0}\sum_{j>-k}x_{k,j}$ where
  $x_{k,j}\in\Ext^k_R(M,M)_j$.  By abuse of notation, we view
  $x_{k,j}$ as a homomorphism from $P_M^k$ to $M$ of degree $j$
  greater than $-k$ such that $x_{k,j}d_M^{k+1}=0$, where
  $(P_M^{\bullet},d_M^{\bullet})$ is a minimal graded projective
  resolution of $M$.  Consider $x^{d+1}=(\sum_{k\geqslant
    0}\sum_{j>-k}x_{k,j})^{d+1}$.  Each term $y$ in $x^{d+1}$ is a
  product of $d+1$ $x_{k,j}$'s , so as an element in some
  $\Ext_R^n(M,M)$, $y$ is represented by a graded map of degree
  greater or equal to $-n+d+1$.  By Lemma \ref{iszero}, each $y =0$
  and hence $x^{d+1}=0$.
\end{proof}

\medskip

\noindent We have the following slightly more explicit way of
describing the subalgebra $\Delta_M$ and the ideal $\mathcal N_M$ for
a linear module $M$.  First, let $\mathfrak r=R_1\oplus
R_2\oplus\cdots$ denote the graded radical of $R$, and let $(P_M^\bullet,
d_M^\bullet)$ be a linear resolution of $M$. Let $\eta\colon
P_M^n\rightarrow M$ be a $1\times k$ matrix with entries in $M$
representing an element of $\Ext_R^n(M,M)$, where $k$ is the number of
indecomposable summands of $P_M^n$. Then $\eta$ represents an element
of $\Delta_M^n$ if and only if $\eta=[m_1,\ldots,m_k]$ where each
$m_i\in M_0$, or equivalently, $\eta$ is a degree $-n$ map from
$P_M^n$ to $M$. Similarly, $\eta\colon P_M^n\rightarrow M$ represents
an element of $\cN_M$ if and only if $\Im \eta\subseteq M\mathfrak r$
(that is, each $m_i$ is in the radical of $M$).

\medskip

\noindent In view of the preceding results we introduce the following
definitions. Let $R$ be a Koszul algebra and let $M$ be a linear
$R$-module. We say that a nonzero homogeneous element
$\eta\in\Ext_R^n(M,M)$ is {\it strongly radical,} if it is in
$\cN_M$. If in addition, $R$ is finite dimensional, then we call
$\eta$ {\it strongly nilpotent}. Note that it is possible to have a
nilpotent homogeneous element in $\Ext_R^*(M,M)$ that is not strongly
nilpotent. For instance if we look at the polynomial ring in $n$
variables over a field, and we let $M$ be the unique graded simple
module, then $\Ext_R^*(M,M)$ is finite dimensional and in this case
$\Ext_R^*(M,M)=\Delta_M$, so every element of $\Delta_M$ of positive
(homological) degree is nilpotent.

\medskip

\noindent We have the following illustration of Proposition
\ref{decomposition}. Viewing a Koszul algebra $R$ as a module over its
enveloping algebra $R^e=R^{\op}\otimes_{\Bbbk}R$,  the
  module $R$ is linear (see \cite{GHMS} for instance), and we get a
decomposition of the Hochschild cohomology ring as
$$\HH^*(R)=\Delta_R\oplus\mathcal N_R.$$

\medskip

\noindent Let $\mathcal E_R$ denote the Ext-algebra of $R$, that is
$\mathcal E_R=\Ext_R^*(R/\mathfrak r,R/\mathfrak r)$. Recall also that
the {\it graded center} of $\mathcal E_R$ is the graded subring
$Z_{\gr}(\mathcal E_R)$ generated by all the homogeneous elements $u$
such that $uv=(-1)^{|u||v|}vu$ for every homogeneous element $v$ of
$\mathcal E_R$, where $|x|$ denotes the degree of the homogeneous
element $x$. There is a homomorphism $T$ of graded algebras
$$T\colon \HH^*(R)\rightarrow\mathcal E_R$$
given by $T(\eta)=R/\mathfrak r\otimes_R\eta$. Since $R$ is Koszul,
the image of this homomorphism is the graded center $Z_{\gr}(\mathcal E_R)$ (see
\cite{BGSS}). Summarizing, we have the following characterization of
the graded center of a Koszul algebra:

\begin{thm}\label{gr-cent}
  Let $S$ be a Koszul algebra and let $R$ be its Koszul dual, so
  $S=\mathcal E_R$. Let $\HH^*(R)=\Delta_R\oplus\mathcal N_R$. Then
  the homomorphism $T$ induces an isomorphism of graded
  $\Bbbk$-algebras $\Delta_R\cong Z_{\gr}(S)$.
\end{thm}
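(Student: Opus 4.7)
The plan is to reduce the statement to two claims: (a) $T$ annihilates $\cN_R$; and (b) $T$ restricted to $\Delta_R$ is injective. Combined with Proposition~\ref{decomposition} and the quoted result of \cite{BGSS} that the image of $T$ is $Z_{\gr}(\mathcal E_R)$, these two claims force $T|_{\Delta_R}\colon \Delta_R \to Z_{\gr}(S)$ to be a bijection, and since $T$ is already a graded algebra homomorphism this restriction is then the desired isomorphism.

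For (a), let $(\mathbb P^\bullet, d^\bullet)$ be a minimal graded projective resolution of $R$ over $R^e$. Since $R$ is a linear $R^e$-module, $\mathbb P^n$ is generated in degree $n$, so a representative $\eta\colon \mathbb P^n \to R$ of an element of $\Ext^n_{R^e}(R,R)_i$ with $i > -n$ has image in $R_{\geq n+i} \subseteq \mathfrak{r}$. The chain-level map $R/\mathfrak{r} \otimes_R \eta$ is then identically zero, so $T(\eta) = 0$.

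For (b), the key observation is that the diagonal bidegree $(n,-n)$ admits no nonzero coboundaries in either $\HH^n(R)$ or $\Ext^n_R(R/\mathfrak{r}, R/\mathfrak{r})$: any degree-$(-n)$ cochain out of a projective module generated in degree $n-1$ must send its generators into $R_{-1} = 0$ (respectively $(R/\mathfrak{r})_{-1} = 0$), and hence vanish identically. Consequently, each element of $\Delta_R^n$ is represented by a unique bimodule cocycle $\eta\colon \mathbb P^n \to R$ of degree $-n$, and by linearity such an $\eta$ is in turn determined by its restriction $\eta_0\colon (\mathbb P^n)_n \to R_0$ to the top. The tensor product $R/\mathfrak{r} \otimes_R \mathbb P^n$ can be identified in degree $n$ with $(\mathbb P^n)_n$ (again by linearity, which prevents contributions from $\mathfrak{r}\mathbb P^n$ in that degree), and on this piece $T(\eta)$ is literally the same set map as $\eta_0$, now regarded merely as a one-sided $R_0$-module map into $(R/\mathfrak{r})_0 = R_0$. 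Therefore $T(\eta)=0$ in $\Ext$ forces $\eta_0 = 0$, and hence $\eta = 0$. The main technical obstacle I expect to dwell on is precisely this chain of identifications on the degree-$n$ generators; once these are in place, the injectivity reduces to the tautology that a zero map is zero.
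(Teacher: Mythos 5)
Your proof is correct and takes essentially the same route as the paper: show $T$ kills $\cN_R$ because representatives of elements of positive internal degree send the generators of $P^n$ into $\mathfrak{r}$, and show $T|_{\Delta_R}$ is injective by examining what happens to the degree-$n$ generators of $P^n$ under $R/\mathfrak{r}\otimes_R(-)$. Your explicit observation that bidegree $(n,-n)$ admits no nonzero coboundaries on either side of $T$ (so that chain-level vanishing is equivalent to vanishing in cohomology there) spells out a step that the paper's proof leaves implicit behind its ``clearly implies.''
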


\begin{proof} Let $(P^{\bullet},d^{\bullet})$ be a linear resolution
  of $R$ over its enveloping algebra $R^e$, and let
  $\varepsilon\in\mathcal N_R$ be homogeneous of degree $n$. Now,
  $\varepsilon$ can be represented by a matrix $[z_1,\dots,z_k]\colon
  P^n\rightarrow R$ where $k$ denotes the number of indecomposable
  summands of $ P^n$ and where each entry $z_i\in\mathfrak r$, so it
  is clear that $T(\varepsilon)=0$. Therefore we have an induced
  homomorphism of graded algebras $T\colon\Delta_R\rightarrow Z_{\gr}(
  \mathcal E_R )$. By \cite{BGSS} it is clear that this restriction of
  $T$ to $\Delta_R$ is surjective.

\medskip
\noindent Now let $Q$ be an indecomposable projective $R^e$-summand of
$ P^n$.  We may write $Q=(e^o_v\otimes_{\Bbbk} e_w)R^e$ where $e^o_v$
and $e_w$ are primitive idempotents in $R^{\op}$ and $R$ respectively.
If we have a nonzero map $\eta\colon\ Q\rightarrow R(-n)$ such that
$\eta(e^o_v\otimes_{\Bbbk} e_w)=r_0\in R_0$, then clearly $R/\mathfrak
r\otimes_R\eta\ne 0$. This clearly implies that the restriction of $T$
to $\Delta_R$ is one-to-one.
\end{proof}

\noindent It has been conjectured in \cite{SS} that for a finite
dimensional algebra $R$, the Hochschild cohomology modulo the ideal
generated by the nilpotent elements is finitely generated as an
algebra over the ground field. This conjecture has been disproved by
Xu (\cite{X}), see also \cite{S}. However, there are many instances
where this quotient is finitely generated, to which we add the
following result:

\begin{prop} Let $R$ be a finite dimensional Koszul $\Bbbk$-algebra
  with Koszul dual $\mathcal E_R$ and let $\widetilde{\mathcal N}$ be
  the ideal of $\HH^*(R)$ generated by the homogeneous nilpotent
  elements. Assume that the graded center of $\mathcal E_R$ is a
  finitely generated $\Bbbk$-algebra.  Then
  $\HH^*(R)/{\widetilde{\mathcal N}}$ is also a finitely generated
  $\Bbbk$-algebra.
\end{prop}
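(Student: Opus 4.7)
The plan is to combine the three main results established earlier in the section. By Proposition \ref{decomposition}, $\HH^*(R) = \Delta_R \oplus \mathcal N_R$ as $\Bbbk$-vector spaces with $\mathcal N_R$ a two-sided ideal, so the quotient map identifies $\HH^*(R)/\mathcal N_R$ with $\Delta_R$ as graded $\Bbbk$-algebras. The first step I would make explicit is that $R$, viewed as a module over $R^e$, is a linear module of finite graded length: it is linear over $R^e$ because $R$ is Koszul, and it has finite graded length because $R$ is finite dimensional over $\Bbbk$.

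With finite graded length in hand, Proposition \ref{nilpot} applies and tells us that every homogeneous element of $\mathcal N_R$ is nilpotent. Hence $\mathcal N_R \subseteq \widetilde{\mathcal N}$, and consequently $\HH^*(R)/\widetilde{\mathcal N}$ is a quotient of $\HH^*(R)/\mathcal N_R$, which is isomorphic to $\Delta_R$.

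Next, I would invoke Theorem \ref{gr-cent}, which provides an isomorphism $\Delta_R \cong Z_{\gr}(\mathcal E_R)$ of graded $\Bbbk$-algebras. Since the graded center $Z_{\gr}(\mathcal E_R)$ is finitely generated by hypothesis, the algebra $\Delta_R$ is finitely generated. Finite generation passes to quotients, so $\HH^*(R)/\widetilde{\mathcal N}$ is a finitely generated $\Bbbk$-algebra.

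The proof is essentially a concatenation of the results already proved, and no step poses a serious obstacle; the one thing worth checking carefully is the applicability of Proposition \ref{nilpot} to $R$ regarded as an $R^e$-module, but this is immediate from finite dimensionality and the fact that Koszul algebras are linear modules over their enveloping algebras.
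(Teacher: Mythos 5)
Your proof is correct and follows the same route as the paper's: identify $\HH^*(R)/\mathcal N_R$ with $\Delta_R$ via Proposition \ref{decomposition}, apply Theorem \ref{gr-cent} to get finite generation of $\Delta_R$, and pass to the quotient. You are somewhat more careful than the paper in making explicit the step that $\mathcal N_R \subseteq \widetilde{\mathcal N}$ (via Proposition \ref{nilpot} and finite graded length of $R$ over $R^e$), which the paper leaves implicit.
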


\begin{proof} Let $\Delta_R$ be the diagonal subalgebra of the
  Hochschild cohomology ring. By Theorem \ref{gr-cent}, $\Delta_R$ is
  isomorphic as a graded $\Bbbk$-algebra to the graded center of
  $\mathcal E_R$, so $\Delta_R$ is also finitely generated. By
  Proposition \ref{decomposition}, $\HH^*(R)/{\widetilde{\mathcal N}}$
  is a quotient of $\Delta_R$, so the result follows.
\end{proof}

\noindent We also have the following immediate consequence:

\begin{thm} Let $R$ be a finite dimensional Koszul $\Bbbk$-algebra
  having the property that its Ext-algebra is commutative, and let
  $\widetilde{\mathcal N}$ be the ideal of $\HH^*(R)$ generated by the
  homogeneous nilpotent elements. Then $\HH^*(R)/{\widetilde{\mathcal
      N}}$ is a finitely generated $\Bbbk$-algebra.
\end{thm}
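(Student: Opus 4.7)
The plan is to deduce this from the preceding proposition, which reduces the task to showing that, under the hypothesis that $\mathcal E_R$ is commutative, the graded center $Z_{\gr}(\mathcal E_R)$ is a finitely generated $\Bbbk$-algebra.

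The first step would be to note that $\mathcal E_R$ itself is a finitely generated $\Bbbk$-algebra: since $R$ is Koszul, $\mathcal E_R$ is generated in homological degrees $0$ and $1$, and since $R$ is finite-dimensional both $\mathcal E_R^0$ and $\mathcal E_R^1$ are finite-dimensional over $\Bbbk$. If $\operatorname{char}\Bbbk=2$ the graded commutator signs are trivial, so commutativity forces $Z_{\gr}(\mathcal E_R)=\mathcal E_R$, and the claim is immediate. For the remaining case $\operatorname{char}\Bbbk\neq 2$, I would consider the $\Bbbk$-algebra involution $\sigma\colon x\mapsto (-1)^{|x|}x$ of $\mathcal E_R$ (defined on homogeneous elements), whose fixed subring is the even part $\mathcal E_R^{\operatorname{even}}=\bigoplus_{n\geqslant 0}\mathcal E_R^{2n}$. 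Commutativity of $\mathcal E_R$ combined with $\operatorname{char}\Bbbk\neq 2$ is precisely what allows Hilbert--Noether invariant theory to apply: it gives that $\mathcal E_R^{\operatorname{even}}$ is a finitely generated $\Bbbk$-algebra and that $\mathcal E_R$ is a finitely generated module over it.

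To finish, I would use the observation that every even-degree element of $\mathcal E_R$ graded-commutes with every homogeneous element: indeed $(-1)^{|u||v|}=1$ whenever $|u|$ is even, and $\mathcal E_R$ is commutative by hypothesis. This yields the chain of inclusions $\mathcal E_R^{\operatorname{even}}\subseteq Z_{\gr}(\mathcal E_R)\subseteq \mathcal E_R$. Since $\mathcal E_R^{\operatorname{even}}$ is Noetherian (finitely generated commutative over a field) and $\mathcal E_R$ is finitely generated over it, the intermediate $\mathcal E_R^{\operatorname{even}}$-submodule $Z_{\gr}(\mathcal E_R)$ is itself a finitely generated $\mathcal E_R^{\operatorname{even}}$-module, and therefore a finitely generated $\Bbbk$-algebra. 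Applying the preceding proposition then yields that $\HH^*(R)/\widetilde{\mathcal N}$ is finitely generated.

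The step requiring the most care is the invocation of Hilbert--Noether in odd characteristic: strict commutativity of $\mathcal E_R$ (as opposed to mere graded commutativity) is what makes $\mathcal E_R$ a commutative ring on which $\mathbb Z/2$ acts by ring automorphisms via $\sigma$, so that invariant theory in the non-modular setting produces both finite generation of the invariants and module-finiteness above them. The whole point of the commutativity hypothesis is to sandwich $Z_{\gr}(\mathcal E_R)$ between two rings whose finiteness properties force it to be finitely generated, after which the preceding proposition does the rest of the work.
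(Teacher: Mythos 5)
Your proof is correct, but it takes a genuinely different route from the paper's. You establish the a priori stronger fact that $Z_{\gr}(\mathcal E_R)$ itself is a finitely generated $\Bbbk$-algebra, by sandwiching it as an $\mathcal E_R^{\mathrm{even}}$-submodule between $\mathcal E_R^{\mathrm{even}}$ (Noetherian, by invariant theory or, more simply, because a Koszul algebra with finite-dimensional graded pieces has even part generated in degrees $0$ and $2$) and $\mathcal E_R$ (module-finite over $\mathcal E_R^{\mathrm{even}}$), and then appealing to the preceding proposition. The paper instead computes $Z_{\gr}(\mathcal E_R)$ explicitly: in characteristic $\neq 2$, commutativity forces every odd-degree graded-central element to annihilate all odd-degree elements, so $Z_{\gr}(\mathcal E_R)=\mathcal E_R^{\mathrm{even}}\oplus I$ where $I$ is a square-zero ideal of odd-degree elements; passing to the quotient by nilpotents then identifies $\Delta_R$ modulo nilpotents with $\mathcal E_R^{\mathrm{even}}$ modulo nilpotents, which is finitely generated — never needing to verify that $Z_{\gr}(\mathcal E_R)$ itself is finitely generated, and bypassing the preceding proposition. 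Your approach buys a cleaner logical structure (a black-box reduction to the graded center being finitely generated), while the paper's buys a concrete description of $Z_{\gr}(\mathcal E_R)$ and its nilradical; the Hilbert--Noether invocation in your argument is slightly heavier machinery than needed, since module-finiteness of $\mathcal E_R$ over $\mathcal E_R^{\mathrm{even}}$ follows directly from the Koszul grading without invariant theory.
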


\begin{proof} Assume that $S=\mathcal E_R$ is commutative. If char
  $\Bbbk = 2$, then $Z_{\gr}(S)=S$ so the graded center is finitely
  generated. If the characteristic of $\Bbbk$ is different from 2,
  then every element $x\in Z_{\gr}(S)$ of odd degree is nilpotent with
  nilpotency index 2. So the graded center of $S$ decomposes as
  $Z_{\gr}(S)=S_{\ev}\oplus I$ where $S_{\ev}$ denotes the even degree
  part of $S$, and $I$ is a two sided ideal of the graded center
  generated by nilpotent elements of odd degree. This means that we
  have an isomorphism of graded algebras between $\Delta_R$ modulo the
  ideal generated by its nilpotent elements and $S_{\ev}$ modulo its
  nilpotent elements. Since $S$ is a Koszul algebra, $S_{\ev}$ is
  finitely generated and the result follows.
\end{proof}

\noindent The following example shows that we can have nonzero
elements in $\Delta^n_M$ that are nilpotent.

\begin{Example} We show that there are periodic linear modules $M$
  with constant Betti number 2 such that there is a nonzero nilpotent
  element $\eta\in\Delta^n_M$. We let $$R=\Bbbk\langle
  x,y\rangle/{\langle xy-qyx,x^2,y^2\rangle}$$ where $0\ne
  q\in\Bbbk$. Let $\overline x$ and $\overline y$ denote the residue
  classes in $R$ of $x$ and $y$ respectively.  Assuming $q\ne 0$, $R$
  is a special biserial, selfinjective Koszul algebra.  We now let
\[M=\coker\xymatrix{(R^2\hskip 3pt\ar^{\left(
\begin{smallmatrix}
-\overline y&0\\ \phantom{-}\overline x&q\overline y\end{smallmatrix}\right)}[rr]&&
\hskip 2pt R^2).}\]
We note that $M$ is four-dimensional with basis
$t_1,t_2,b_1,b_2$ such that $t_1\overline x=b_1$,
$t_1\overline y=b_2$, $t_2\overline x=b_2$, and
$t_2\overline y=0$.  We see that $M$ is a string module with
shape
\[\xymatrix{
&t_1\ar@{-}[dl]^{\overline x}\ar@{-}[dr]^{\overline y}&&t_2\ar@{-}[dl]^{\overline x}\\
b_1&&b_2
}\]
\noindent Let $p\colon R^2\to M$ be the map sending
$(1,0)^t$ to $t_1$ and $(0,1)^t$ to $t_2$.  Viewing $M$ as a graded
module generated in degree $0$, a minimal graded projective
resolution of $M$ is given by
\[\xymatrix@1{\cdots\ar[r] & R^2(-2)\ar[rr]^(.5){\left(\begin{smallmatrix}
-\overline y&0\\ \phantom{-}\overline x&q\overline y\end{smallmatrix}\right)}&& R^2(-1)\ar[rr]^(.5){\left(\begin{smallmatrix}
-\overline y&0\\ \phantom{-}\overline x&q\overline y\end{smallmatrix}\right)}&& R^2\ar[r]^{p}& M\ar[r] & 0.
}\]
\noindent Therefore, for each $n$, $\Omega^nM=M(-n)$, so $M$ is a
periodic linear module with constant Betti numbers equal to $2$.

\medskip
\noindent We now define a nonzero nilpotent element
$\eta\in\Delta_M^n$.  Let $N$ be the submodule of $M$ generated by
$t_2$.  We note that $M/N\cong N$.  Let $\eta\in \Delta^1_M$ be given
by composition of the following maps:
\[R^2(-1)	\stackrel{p(-1)}{\longrightarrow} M(-1) \stackrel{\pi}{\longrightarrow} (M/N)(-1)\stackrel{\cong}{\longrightarrow}N(-1){\hookrightarrow}M(-1),
\]
where $\pi$ is the canonical surjection.
The map $\eta\colon R^2(-1)\to M(-1)$ lifts as follows:
\[
\xymatrix{
\cdots \ar[r] &R^2(-2)\ar[d]^{l^1}\ar[rr]^(.50){\left(\begin{smallmatrix}
-\overline y&0\\ \phantom{-}\overline x&q\overline y\end{smallmatrix}\right) }&&R^2(-1)\ar[d]^{l^0}
\ar[dr]^{\eta}
\\
\cdots \ar[r] &R^2(-2)\ar[rr]^(.50){\left(\begin{smallmatrix}
-\overline y&0\\ \phantom{-}\overline x&q\overline y\end{smallmatrix}\right) }&&R^2(-1)\ar^{p(-1)}[r]&M(-1)\ar[r]&0
}
\]
where $l^i$ is given by the matrix
$\left(\begin{smallmatrix}0&0\\(-1/q)^i&0\end{smallmatrix}\right)$.
It is clear that $\eta$ is nonzero and that $\eta^2=0$. \
\end{Example}

\noindent We end this section with the following observation:

\begin{Remark}
  Let $R$ be a Koszul algebra, $M$ a linear $R$-module, and let
  $\eta\colon P_M^n\rightarrow M$ be a map representing an element of
  $\Ext^n_R(M,M)$ for some $n\geqslant 1$ and assume also that the
  image of $\eta$ is not entirely contained in the radical of
  $M$. This can happen for instance if $\eta$ is a surjective
  homomorphism.  We claim that in this case, $\eta$ cannot represent
  the zero element in cohomology. To see this, assume by contradiction
  that $\eta$ factors through $P_M^{n-1}$.  Since the minimal
  resolution of $M$ is linear, $\eta$ must then equal a matrix with
  entries in $M\mathfrak r$, but on the other hand, by
  Proposition~\ref{decomposition}, $\eta=\overline\eta+\varepsilon$ where
  $\overline{\eta}$ is a matrix with entries in $M_0$, and
  $\varepsilon$ is a matrix with entries in $M\mathfrak r$. This
  implies that $\overline{\eta}=0$. This contradicts our assumption,
  and the assertion follows.
\end{Remark}

\section{Liftings}

\noindent In this section we present some technical results about
consecutive liftings associated with elements of $\Delta_M$ and of
$\mathcal N_M$. Let $\eta\colon P_M^n\rightarrow M$ represent an
element of $\Ext_R^n(M,M)$. Consider the following diagram
$$\xymatrix{
\cdots\ar[r]& P_M^{n+m}\phantom{x} \ar[r]^{d_M^{m+n}} \ar[d]^{l^m} & \cdots\ar[r] & P_M^{n+1} \ar[r]^{d_M^{n+1}}
\ar[d]^{l^1} & P_M^n \ar[d]^{l^0}\ar[dr]^{\eta} & \\\cdots\ar[r]&  P_M^m\phantom{x} \ar[r]^{\phantom{xx}d_M^m}
& \cdots\ar[r] & P_M^1 \ar[r]^{d_M^1} & P_M^0 \ar[r]^{d_M^0} &\ M\phantom{xx}
\\}$$
where we view each lifting $l^i$ as a matrix with entries in $R$.

\begin{lemma} Let $R$ be a Koszul algebra, let $M$ be a linear
  $R$-module, and let $(P_M^\bullet, d_M^\bullet)$ be a linear
  resolution of $M$. Let $\eta\colon P_M^n\rightarrow M$ be a $1\times
  k$ matrix with entries in $M$ representing an element of
  $\Ext_R^n(M,M)$, where $k$ is the number of indecomposable summands
  of $P_M^n$.
\begin{enumerate}
\item[(i)] Suppose that $\Im \eta\subseteq M\mathfrak r$. Then the
  lifting $l^0$ can be chosen in such a way so that its entries are
  all in $\mathfrak r$.
\item[(ii)] Suppose that $\eta=[m'_1,\ldots,m'_k]$ where each $m'_i\in
  M_0$. Then the lifting $l^0$ can be chosen in such a way so that its
  entries are all in $R_0$.
\end{enumerate}
\end{lemma}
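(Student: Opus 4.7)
The plan is to exploit linearity of the resolution and to construct $l^0$ directly on a set of generators of $P_M^n$. I would fix graded decompositions $P_M^n=\bigoplus_{i=1}^k R\varepsilon_i^n$ with $\varepsilon_i^n\in (P_M^n)_n$ and $P_M^0=\bigoplus_{j=1}^\ell R\varepsilon_j^0$ with $\varepsilon_j^0\in (P_M^0)_0$, which is possible precisely because the resolution is linear. Any $R$-linear map $l^0\colon P_M^n\to P_M^0$ is then determined by $l^0(\varepsilon_i^n)=\sum_j r_{ij}\varepsilon_j^0$ with $r_{ij}\in R$, and the $r_{ij}$ are the matrix entries referred to in the statement. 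Minimality of the resolution means $d_M^0$ restricts to an isomorphism $(P_M^0)_0\xrightarrow{\,\sim\,}M_0$, and more generally to a surjection $(P_M^0)_s\twoheadrightarrow M_s$ for every $s\geqslant 0$.

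For part (ii), each entry $m'_i\in M_0$ has a unique preimage $q_i\in (P_M^0)_0$ under $d_M^0$. Since $(P_M^0)_0=\bigoplus_j R_0\varepsilon_j^0$, the element $q_i$ is automatically an $R_0$-linear combination of the $\varepsilon_j^0$. Setting $l^0(\varepsilon_i^n):=q_i$ and extending $R$-linearly produces a lifting of $\eta$ whose matrix entries all lie in $R_0$, as required.

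For part (i), I would decompose each $m'_i\in M\mathfrak r$ into its homogeneous components $m'_i=\sum_{s\geqslant 1} m'_{i,s}$ with $m'_{i,s}\in M_s$ (a finite sum). For each $s\geqslant 1$, the surjection $d_M^0\colon (P_M^0)_s\twoheadrightarrow M_s$ provides $q_{i,s}\in (P_M^0)_s$ with $d_M^0(q_{i,s})=m'_{i,s}$. Since $(P_M^0)_s=\bigoplus_j R_s\varepsilon_j^0$ for $s\geqslant 1$, the coefficients of $q_{i,s}$ already lie in $R_s\subseteq\mathfrak r$. Defining $l^0(\varepsilon_i^n):=\sum_{s\geqslant 1}q_{i,s}$ then yields a lifting whose matrix entries all lie in $\mathfrak r$.

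No substantial obstacle is expected: the lemma is a direct consequence of graded bookkeeping. The linear-resolution hypothesis is what forces the generators of $P_M^n$ to sit in degree $n$ and those of $P_M^0$ in degree $0$, so that for a map of degree $-n$ the entries of the lifting must land in $R_0$, and for any map pushing the generators into $M\mathfrak r$ they must land in $\mathfrak r$. The only point requiring a little care is handling the inhomogeneous case in (i) by splitting $\eta$ into homogeneous components and lifting each separately.
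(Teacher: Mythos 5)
Your proof is correct and takes essentially the same approach as the paper: both arguments are purely graded bookkeeping based on the fact that the generators of $P_M^n$ sit in degree $n$ and those of $P_M^0$ in degree $0$, so any lifting of a degree-$(-n)$ map (resp.\ a map whose image avoids $M_0$) is forced to have entries in $R_0$ (resp.\ in $\mathfrak r$). The paper phrases this by reducing to the case where $\eta$ is a single homogeneous map of degree $s>-n$ and then observing that a degree-$s$ lifting exists and its entries lie in $R_{n+s}\subseteq\mathfrak r$, whereas you decompose the entries $m'_i$ into their homogeneous components and lift those one at a time — but this is the same reduction carried out on the entries rather than on the map, and the content is identical. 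One small point of care: when you write $P_M^n=\bigoplus_i R\varepsilon_i^n$ you should keep in mind that the summands are generally indecomposable projectives $e_iR$ (or $Re_i$) rather than free modules, so $l^0(\varepsilon_i^n)$ must land in the image of the corresponding idempotent; this is automatic in your construction because $m'_i=\eta(\varepsilon_i^n)$ already lies in $Me_i$ and $d_M^0$ is a module map, but it is worth stating explicitly to justify that your prescription on generators extends to a well-defined $R$-homomorphism.
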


\begin{proof} Since the image of $\eta$ is in the radical of $M$,
  $\eta$ is a sum of graded homomorphisms of degrees greater than
  $-n$. Without loss of generality we may assume that $\eta$ is a
  graded homomorphism of degree $s>-n$.  As $d_M^0$ is of degree zero,
  the lifting $l^0$ is also of degree $s$ and can be chosen with
  entries in $\mathfrak r$.  The degree $0$ case is similar.
\end{proof}

\noindent It turns out that if one of the liftings has its entries in
$\mathfrak r$, or entirely in $R_0$, then so do all its successors:

\begin{lemma}\label{successive liftings} Keeping the above notation,
  let $R$ be a Koszul algebra and let $M$ be a linear $R$-module. Let
  $\eta\colon P_M^n\rightarrow M$ be a $1\times k$ matrix with entries
  in $M$ representing an element of $\Ext_R^n(M,M)$, where $k$ is the
  number of indecomposable summands of $P_M^n$.
\begin{enumerate}
\item[(1)] Assume that for some $i$, the lifting $l^i$ has all its
  entries in $\mathfrak r$. Then we may choose the subsequent liftings
  $l^{i+1}, l^{i+2},\dots$ in such a way that all their entries are
  also in $\mathfrak r$.
\item[(2)] Assume that for some $i$, the lifting $l^i$ has all its
  entries in $R_0$. Then we may choose the subsequent liftings
  $l^{i+1}, l^{i+2},\dots$ in such a way that all their entries are
  also in $R_0$.
\end{enumerate}
\end{lemma}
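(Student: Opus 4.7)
The approach is to induct on the step after $i$: assuming $l^i$ has the stated property, produce $l^{i+1}$ with the same property. Since $d_M^i l^i d_M^{n+i+1} = l^{i-1} d_M^{n+i} d_M^{n+i+1} = 0$, the composition $l^i d_M^{n+i+1}$ lands in $\Im d_M^{i+1}$, so some lifting $l^{i+1}$ exists by projectivity of $P_M^{n+i+1}$; the real issue is to control its matrix entries. The central ingredient is that $(P_M^\bullet, d_M^\bullet)$ is a \emph{linear} resolution, so every differential $d_M^k$ is a matrix whose entries lie in $R_1$.

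Starting from any lifting $l^{i+1}$, split its matrix entrywise as $l^{i+1} = (l^{i+1})^{(0)} + (l^{i+1})^{(+)}$, where the first summand has entries in $R_0$ and the second in $\mathfrak{r}$. Then $d_M^{i+1}(l^{i+1})^{(0)}$ has entries in $R_1 \cdot R_0 = R_1$, whereas $d_M^{i+1}(l^{i+1})^{(+)}$ has entries in $R_1 \cdot \mathfrak{r} \subseteq \bigoplus_{k \geqslant 2} R_k$. In case (1), $l^i$ with entries in $\mathfrak{r}$ gives $l^i d_M^{n+i+1}$ entries in $\mathfrak{r} \cdot R_1 \subseteq \bigoplus_{k \geqslant 2} R_k$; comparing graded components of the equation $d_M^{i+1} l^{i+1} = l^i d_M^{n+i+1}$ then forces $d_M^{i+1}(l^{i+1})^{(0)} = 0$ and $d_M^{i+1}(l^{i+1})^{(+)} = l^i d_M^{n+i+1}$, so $(l^{i+1})^{(+)}$ is itself a valid lifting with all entries in $\mathfrak{r}$. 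In case (2), $l^i$ with entries in $R_0$ gives $l^i d_M^{n+i+1}$ entries in $R_0 \cdot R_1 = R_1$; the same matching yields $d_M^{i+1}(l^{i+1})^{(0)} = l^i d_M^{n+i+1}$ and $d_M^{i+1}(l^{i+1})^{(+)} = 0$, so $(l^{i+1})^{(0)}$ is itself a lifting and has entries in $R_0$. Iterating settles both parts.

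The only delicate point is the degree bookkeeping, which rests on knowing that $d_M^k$ has entries exactly in $R_1$---precisely the content of linearity of the resolution. Without this refinement of ``entries in $\mathfrak{r}$'', one could not separate $R_1$ from $\bigoplus_{k \geqslant 2} R_k$ to pick out $(l^{i+1})^{(0)}$ and $(l^{i+1})^{(+)}$ independently, and the argument would collapse.
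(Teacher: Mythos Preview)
Your proof is correct and follows essentially the same approach as the paper: both exploit that linearity forces the differentials to have entries in $R_1$, so that $l^i d_M^{n+i+1}$ lands in $\mathfrak{r}^2$ (respectively $R_1$), which then constrains the possible $l^{i+1}$. The paper simply asserts at this point that $l^{i+1}$ may be chosen with entries in $\mathfrak{r}$ (respectively $R_0$), whereas you spell out the mechanism explicitly by decomposing an arbitrary lifting into its $R_0$- and $\mathfrak{r}$-parts and reading off from the grading which part already satisfies the lifting equation.
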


\begin{proof} First the radical
case. Consider the following commutative diagram:
$$\xymatrix{ P_M^{n+i+1}
\ar[rr]^-{d_M^{n+i+1}}
\ar[d]^{l^{i+1}} & & P_M^{n+i}\ar[d]^{l^i}\\P_M^{i+1} \ar[rr]^{d_M^n} & &P_M^i}$$

\noindent Since the entries of $l^id_M^{n+i+1}$ are in $\mathfrak r^2$
and those of the differentials are in $R_1$ it follows that we may
choose $l^{i+1}$ as a matrix with entries in $\mathfrak r$. The degree
$0$ case is similar.
\end{proof}

\noindent In regard to the second part of the previous result we
observe that if the lifting $l^i$ has all its entries in $R_0$, then
there exists in fact a unique lifting $l^{i+1}$ having all its entries
in $R_0$ since the difference of any two such liftings would have to
factor through the radical of $P_M^{n+i}$.

\smallskip

\noindent The following corollary discusses two situations in which
all the liftings of a nonzero element of $\Delta_M^n$ must be nonzero
matrices with entries in $R_0$:

\begin{cor}\label{non-zero-liftings} Let $R$ be a Koszul algebra and
  let $M$ be a linear $R$-module. Let $\eta\colon P_M^n\rightarrow M$
  be a map representing a nonzero homogeneous element of degree $n$ in
  $\Delta_M$ for some $n\geqslant 1$. Assume that either $\eta$ is
non-nilpotent, or that $R$ is selfinjective. Then all the successive liftings $l^i$ of $\eta$
are nonzero and can be chosen to be matrices whose nonzero entries are all in $R_0$.
\end{cor}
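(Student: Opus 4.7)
The plan is to produce a canonical family of liftings of $\eta$ with entries in $R_0$ and then to rule out vanishing of any of them in each of the two hypothesized situations. Since $\eta\in\Delta^n_M$ is represented by a $1\times k$ matrix with entries in $M_0$, case (ii) of the preceding lemma gives $l^0$ with entries in $R_0$, nonzero because $\eta$ is; Lemma \ref{successive liftings}(2) then yields successors $l^1, l^2,\ldots$ with entries in $R_0$. I would note that these canonical liftings are uniquely determined, since any two such would differ by a chain homotopy $s^j\colon P_M^{n+j}\to P_M^{j+1}$ of internal degree $-n$, and this must vanish because $P_M^{n+j}$ is generated in degree $n+j$ while $P_M^{j+1}$ has no degree-$j$ part.

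Next I would record the key degree observation: if $l^i=0$ for some $i\geq 1$, then $l^j=0$ for all $j\geq i$. Indeed, the chain-map relation $d_M^{i+1}\,l^{i+1}=l^i\,d_M^{n+i+1}=0$ forces $\Im l^{i+1}\subseteq \Ker d_M^{i+1}=\Omega^{i+2}M$, which is generated in degree $i+2$ and so has trivial degree-$(i+1)$ component; since $l^{i+1}$ has entries in $R_0$, its value on the degree-$(n+i+1)$ generators lies in this trivial component, so $l^{i+1}=0$, and iterating gives the claim.

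In the non-nilpotent case, the Yoneda product formula, computed with the canonical liftings, reads $\eta^{k+1}=\eta\circ l^n\circ l^{2n}\circ\cdots\circ l^{kn}$. If some $l^i$ were zero, then choosing $k$ with $kn\geq i$ yields $l^{kn}=0$ by the observation above, so $\eta^{k+1}=0$, contradicting non-nilpotence.

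In the selfinjective case, I would pass to the stable category: the chain map $l^\bullet$ restricts on kernels to module maps $\hat l^{\,i}\colon \Omega^{n+i+1}M\to\Omega^{i+1}M$ that represent $\Omega^{i+1}(\bar\eta)$ in the stable category, where $\bar\eta$ is the image of $\eta$ under the standard isomorphism $\Ext^n_R(M,M)\cong\underline{\Hom}(\Omega^n M,M)$ for $n\geq 1$. If $l^i=0$, then $\hat l^{\,i}=0$, and since $R$ being selfinjective makes $\Omega$ an auto-equivalence of the (graded) stable module category, $\Omega^{i+1}(\bar\eta)=0$ forces $\bar\eta=0$, hence $\eta=0$ in $\Ext^n_R(M,M)$, a contradiction. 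The hard part will be this last step --- identifying the induced map on kernels with the iterate $\Omega^{i+1}(\bar\eta)$ and invoking the auto-equivalence of $\Omega$ on $\underline{\md}\,R$ for selfinjective $R$ --- while the non-nilpotent case is essentially bookkeeping of the Yoneda multiplication.
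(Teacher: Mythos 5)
Your proposal is correct, and the non-nilpotent half is essentially the paper's own argument: the $R_0$-liftings are unique (your homotopy/degree count is the same observation the paper records after Lemma~\ref{successive liftings}, that the difference of two $R_0$-liftings would land in the radical of $P_M^{n+i}$, hence vanish), and once some $l^i=0$ all later $l^j$ vanish, so a high enough Yoneda power of $\eta$ is represented by the zero map, contradicting non-nilpotence. Where you genuinely diverge is the selfinjective case. The paper argues locally: if $l^i=0$ then $l^{i-1}$ annihilates $\Omega^{n+i}M\subseteq P_M^{n+i-1}$; since $R$ is selfinjective and the resolution is minimal, $l^{i-1}$ then factors through the inclusion of $\Omega^{n+i-1}M$ into its injective envelope $P_M^{n+i-2}$, i.e.\ through the differential, so its entries lie in $\rrad$ and not in $R_0$ unless $l^{i-1}=0$; cascading down gives $l^0=0$, contradicting $\eta\ne 0$. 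You instead pass to the graded stable category, identify the restriction $\hat l^i\colon\Omega^{n+i+1}M\to\Omega^{i+1}M$ with $\Omega^{i+1}(\bar\eta)$ (which is correct, using $P_M^{n}\twoheadrightarrow\Omega^n M$ and $P_M^0\twoheadrightarrow M$ as projective covers), and appeal to the Heller functor $\Omega$ being a faithful auto-equivalence of $\underline{\md}\,R$, so $\Omega^{i+1}(\bar\eta)=0$ forces $\bar\eta=0$, i.e.\ $\eta=0$. Both routes are valid; yours is shorter and conceptually cleaner, avoiding the socle/injective-envelope bookkeeping, at the cost of invoking the stable-category machinery (and, implicitly, the finite-dimensionality of $R$ needed for $\Omega$ to be an equivalence, a hypothesis the paper also tacitly uses in its injective-envelope step).
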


\begin{proof} Since $\eta$ is nonzero it is clear that the first
  lifting $l^0\ne 0$ and that we may choose all the other liftings
  with entries in $R_0$. Assume first that $\eta$ is non-nilpotent. If
  a lifting $l^i$ was zero, then all the successive liftings would
  also be zero, implying that $\eta$ must be nilpotent. Let us
  consider now the situation when $R$ is selfinjective.  If one of
  these liftings was zero, then the previous one would have to factor
  through its injective envelope hence its entries would not be in
  $R_0$.
\end{proof}

\noindent We continue with two observations. They hold in the more
general context when $R=R_0\oplus R_1\oplus \cdots$ is a graded
$\Bbbk$-algebra with $R_0=\Bbbk\times\cdots\times\Bbbk$. Assume that
$g\colon P\rightarrow Q$ is a degree zero homomorphism between two
projective $R$-modules generated in the same degree. Let $m$ and $n$
be the number of indecomposable summands of $P$ and of $Q$
respectively. Then $g$ can be represented as an $n\times m$ matrix
with entries in $R_0$. We have the following:

\begin{lemma}\label{nice-map-between-projectives}
  Let $\Bbbk$ be a field, let $R=R_0\oplus R_1\oplus \cdots$ be a
  graded algebra with $R_0=\Bbbk\times\cdots\times\Bbbk$, and let $P$
  and $Q$ be two finitely generated projective $R$-modules generated
  in the same degree. Let $g\colon P\rightarrow Q$ be a degree zero
  homomorphism that can be represented by a matrix with entries in
  $R_0$. Then we have $P=\Ker g\oplus P'$ and $Q=\Im g\oplus Q'$,
  $g|_{P'}\colon P'\rightarrow\Im g$ is an isomorphism, and $Q'$ is
  isomorphic to the cokernel of $g$.
\end{lemma}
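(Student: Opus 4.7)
The plan is to reduce the statement to linear algebra over the semisimple ring $R_0 = \Bbbk \times \cdots \times \Bbbk$. Without loss of generality suppose $P$ and $Q$ are generated in degree zero, so that $P_0$ and $Q_0$ are finitely generated $R_0$-modules. The correspondence $P \mapsto P_0$ (with quasi-inverse $N \mapsto R \otimes_{R_0} N$) is an equivalence between finitely generated graded projective $R$-modules generated in degree zero and finitely generated $R_0$-modules, carrying degree zero homomorphisms to $R_0$-linear maps. The hypothesis on $g$ asserts exactly that $g$ corresponds under this equivalence to the $R_0$-linear map $g_0\colon P_0\to Q_0$ obtained by restricting $g$ to degree zero.

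Because $R_0$ is semisimple, every short exact sequence of $R_0$-modules splits. Applying this to the canonical sequences associated with $g_0$ yields decompositions
\[
P_0 = \Ker g_0 \oplus P'_0 \quad \text{and} \quad Q_0 = \Im g_0 \oplus Q'_0,
\]
with $g_0|_{P'_0}$ an isomorphism onto $\Im g_0$ and $Q'_0 \cong \Coker g_0$.

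The final step is to apply the base change $R \otimes_{R_0} -$ to transport these decompositions back to $P$ and $Q$. Since $R$ is free as a right $R_0$-module, this functor is exact, so it carries $\Ker g_0$, $\Im g_0$ and $\Coker g_0$ to $\Ker g$, $\Im g$ and $\Coker g$ respectively. This yields the asserted decompositions $P = \Ker g \oplus P'$ and $Q = \Im g \oplus Q'$, with $g|_{P'}$ an isomorphism onto $\Im g$ and $Q' \cong \Coker g$. I do not foresee any serious obstacle; the only issue requiring care is the identification of the induced submodules with $\Ker g$, $\Im g$ and $\Coker g$, which is immediate from the exactness of base change along $R_0 \hookrightarrow R$.
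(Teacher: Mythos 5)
Your proof is correct, and it takes a genuinely different route from the one in the paper. You observe that a degree-zero map $g$ given by a matrix over $R_0$ is precisely the base change $R\otimes_{R_0} g_0$ of an $R_0$-linear map $g_0\colon P_0\to Q_0$, split that map over the semisimple ring $R_0$, and transport the splitting back to $R$ by the exact functor $R\otimes_{R_0}-$. The paper instead argues entirely in the category of graded $R$-modules: it reduces to showing $\Coker g$ is projective, and does so by comparing $P\to Q\to\Coker g\to 0$ against a minimal projective presentation $P^1\to P^0\to\Coker g\to 0$; because $P^1$ is generated in degrees $\geqslant 1$ while $P$ is generated in degree $0$, the comparison map $P\to P^1$ is forced to be zero, from which the splitting drops out. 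Your version is more conceptual and makes the role of the hypothesis (entries in $R_0$) completely transparent; the paper's is more elementary and avoids invoking the base-change equivalence. One small slip worth correcting: $R$ is not in general \emph{free} as a right $R_0$-module when $R_0=\Bbbk\times\cdots\times\Bbbk$, but since $R_0$ is semisimple it is automatically projective, hence flat, which is all you need for exactness of $R\otimes_{R_0}-$.
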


\begin{proof} It is enough to prove that the cokernel $M$ of $g$ is a
  direct summand of $Q$. Assume not. Without loss of generality we may
  assume that both $P$ and $Q$ are generated in degree zero. So $M$ is
  also generated in degree zero. Let $P^1\to P^0\to M\to 0$ be a
  minimal projective presentation of $M$. It is clear that $P^0$ is
  generated in degree zero and $P^1$ is generated in degree(s) one and
  higher.  We have the following commutative exact diagram of graded
  modules and degree zero homomorphisms:
\[
\xymatrix{
& P\ar[r]^{g}\ar[d]^{h^1}&Q
\ar[r]^{p}\ar[d]^{h^0}&M\ar[r]\ar@{=}[d]&0\\
& P^1\ar[r]^{d^1}&P^0
\ar[r]^{d^0}&M\ar[r]&0
}\]
where $p$ is the canonical surjection.  Since $P^1$ is generated in
degrees higher than $P$, the map $h^1$ is zero, so $h^0$ factors
through $M$. This implies that $d^0$ splits so $M$ is projective.  The
result follows.
\end{proof}

\noindent We mention without proof the following dual result.

\begin{lemma}\label{nice-map-between-injectives}
  Let $\Bbbk$ be a field, let $R=R_0\oplus R_1\oplus \cdots$ be a
  graded algebra with $R_0=\Bbbk\times\cdots\times\Bbbk$, and let $E$
  and $I$ be two finitely generated injective $R$-modules cogenerated
  in the same degree. Let $g\colon E\rightarrow I$ be a degree zero
  homomorphism that can be represented by a matrix with entries in
  $R_0$. Then we have $E=\Ker g\oplus E'$ and $I=\Im g\oplus I'$,
  $g|_{E'}\colon E'\rightarrow\Im g$ is an isomorphism, and $I'$ is
  isomorphic to the cokernel of $g$. \qed \end{lemma}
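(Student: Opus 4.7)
The plan is to dualize the argument of Lemma~\ref{nice-map-between-projectives}. The crucial step will be to show that $K:=\Ker g$ is an injective $R$-module; once this is established, injectivity of $K\subseteq E$ will provide a splitting $E=K\oplus E'$. Then $g|_{E'}\colon E'\to\Im g$ is injective (kernel $E'\cap K=0$) and surjective (since $g(E)=g(E')$), hence an isomorphism; and since $E'$ is a summand of the injective module $E$, it is injective, so its isomorphic copy $\Im g\subseteq I$ is also injective and therefore splits off to give $I=\Im g\oplus I'$ with $I'\cong \coker g$.

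To show $K$ is injective, I will take a minimal graded injective copresentation $0\to K\xrightarrow{\iota'}J^0\xrightarrow{d^0}J^1$. Because $K\subseteq E$ and $E$ is cogenerated in degree $d$, the socle of $K$ lies in $E_d$, so $J^0$ is also cogenerated in degree $d$; in fact $\mathrm{soc}(J^0)=\mathrm{soc}(K)=(J^0)_d\subseteq K$, so $(J^0/K)_d=0$ and $J^0/K$ is cogenerated in degrees strictly less than $d$. Granting the standard fact that a finitely generated graded injective cogenerated in degrees $<d$ lives in degrees $\leqslant d-1$, this forces $(J^1)_d=0$.

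Next, I will use the injectivity of $E$ to extend the inclusion $K\hookrightarrow E$ to a degree zero map $h^0\colon J^0\to E$ with $h^0\iota'=\iota$. Then $gh^0\iota'=g\iota=0$, so $gh^0$ vanishes on $K=\Ker d^0$ and factors as $gh^0=h^1 d^0$ for some $h^1\colon \Im d^0\to I$, extended via the injectivity of $I$ to $h^1\colon J^1\to I$. A degree argument parallel to the one in the proof of Lemma~\ref{nice-map-between-projectives} will force $h^1=0$: any nonzero element of $\mathrm{soc}(\Im h^1)\subseteq \mathrm{soc}(I)=I_d$ would arise from a nonzero homogeneous preimage in $(J^1)_d$, but $(J^1)_d=0$. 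Hence $gh^0=0$, so $\Im h^0\subseteq K$; combined with $h^0\iota'=\mathrm{id}_K$, this shows $h^0$ is a retraction of $J^0$ onto $K$. Thus $K$ is a direct summand of the injective module $J^0$ and is itself injective.

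The main obstacle will be the degree bookkeeping for graded injectives, specifically the standard fact that a finitely generated graded injective cogenerated in a single degree $d$ is concentrated in degrees $\leqslant d$ (the graded analog of the essentiality of the injective envelope of a simple module over that simple). An alternative, more economical route is to deduce the lemma in one stroke from Lemma~\ref{nice-map-between-projectives} via the graded $\Bbbk$-dual $D=\Hom_\Bbbk(-,\Bbbk)$: $D$ is an exact contravariant involution exchanging finitely generated graded injectives cogenerated in degree $d$ over $R$ with finitely generated graded projectives generated in degree $-d$ over $R^{\op}$, and it preserves the ``entries in $R_0$'' matrix representation; applying $D$ to $g$, invoking Lemma~\ref{nice-map-between-projectives}, and dualizing back then yields the required decompositions.
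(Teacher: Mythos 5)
You are right that the paper states this lemma with a \qed and no argument --- the authors merely remark that it is ``the dual result'' to Lemma~\ref{nice-map-between-projectives} --- so your proposal is supplying the proof the paper omits rather than reproducing one. Both of your routes are correct. The graded $\Bbbk$-duality reduction is exactly what the paper's phrase ``dual result'' invokes: a finitely generated graded injective cogenerated in a single degree over a locally finite positively graded algebra is necessarily finite-dimensional, so $D=\Hom_\Bbbk(-,\Bbbk)$ carries such modules cogenerated in degree $d$ over $R$ to finitely generated graded projectives generated in degree $-d$ over $R\op$, transposes the matrix of $g$ so the ``entries in $R_0$'' hypothesis is preserved, and applying Lemma~\ref{nice-map-between-projectives} to $D(g)$ and dualizing back interchanges the roles of kernel and cokernel and yields precisely the three asserted decompositions. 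Your direct dualization of the argument is also sound: since $J^0$ is the injective envelope of $K$ and $K\subseteq E$ with $\operatorname{soc}(E)=E_d$, one has $\operatorname{soc}(J^0)=(J^0)_d=\operatorname{soc}(K)\subseteq K$, hence $(J^0/K)_d=0$ and so $(J^1)_d=0$; then any degree-zero $h^1\colon J^1\to I$ vanishes because $\Im h^1$ lies in degrees $<d$ while $\operatorname{soc}(I)=I_d$, and the chain $gh^0=h^1d^0=0$, $\Im h^0\subseteq K$, $h^0\iota'=\operatorname{id}_K$ exhibits $K$ as a summand of $J^0$, hence injective, after which the remaining splittings follow as you describe. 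The duality route is shorter and is surely what the authors had in mind; the direct route has the independent merit of making the injective-side degree bookkeeping explicit.
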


\noindent We have the following application of \ref{nice-map-between-projectives}:

\begin{prop} Let $R$ be a graded algebra with
  $R_0=\Bbbk\times\cdots\times\Bbbk$ and let $f\colon L\to M$ be a
  degree zero monomorphism between two non-projective modules $L$ and
  $M$ generated in the same degree. Then there exist induced degree
  zero monomorphisms $\Omega f\colon \Omega L\to\Omega M$ and $l\colon
  P_L\to P_M$ where $P_L$ and $P_M$ denote the corresponding
  projective covers.
\end{prop}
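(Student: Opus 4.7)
\medskip

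\noindent \textbf{Proof proposal.}
The plan is to build $l$ by the usual projective lifting: since $P_L$ is projective and $p_M\colon P_M\to M$ is surjective, the composite $f p_L\colon P_L\to M$ lifts to a map $l\colon P_L\to P_M$ with $p_M l=f p_L$. Because $L$ and $M$ are generated in the same degree, so are $P_L$ and $P_M$, and $l$ can be chosen of degree zero. Restricting $l$ to $\Omega L=\Ker p_L$ one gets $p_M l(\Omega L)=f p_L(\Omega L)=0$, hence $l(\Omega L)\subseteq\Omega M$, and we define $\Omega f$ to be this restriction. The whole task thus reduces to proving that $l$ itself is injective, since then $\Omega f$ is automatically injective as a restriction.

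To see that $l$ is injective, I would invoke Lemma \ref{nice-map-between-projectives}. The map $l$ is a degree zero homomorphism between projectives generated in the same degree, so (using $R_0=\Bbbk\times\cdots\times\Bbbk$) it is represented by a matrix with entries in $R_0$. The lemma then yields a splitting $P_L=\Ker l\oplus P'$. Next I would show $\Ker l\subseteq\Omega L$: if $x\in\Ker l$ then $fp_L(x)=p_M l(x)=0$, and the hypothesis that $f$ is a monomorphism forces $p_L(x)=0$. Combined with the fact that $p_L$ is a projective cover of a module generated in a single degree, so that $\Omega L\subseteq P_L\mathfrak{r}$, this gives $\Ker l\subseteq P_L\mathfrak{r}$.

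The last step is to observe that a nonzero graded direct summand of $P_L$ cannot be contained in $P_L\mathfrak{r}$: any such summand has a generator in the same degree as $P_L$, which lives outside $P_L\mathfrak{r}$ (since $\mathfrak{r}$ is supported in strictly positive degrees). Therefore $\Ker l=0$, $l$ is a monomorphism, and $\Omega f$ inherits the monomorphism property.

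The main obstacle is the injectivity of $l$: naively the lifting is determined only up to an element factoring through $P_M$, and there is nothing stopping it from having a kernel. The trick is to exploit that $l$, being degree zero between projectives generated in the same degree over $R$ with $R_0$ semisimple, falls exactly within the hypothesis of Lemma \ref{nice-map-between-projectives}; once $\Ker l$ is known to split off as a summand, the minimality of the projective cover $p_L$ closes the argument immediately.
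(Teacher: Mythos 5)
Your proof is correct and follows essentially the same route as the paper: both invoke Lemma~\ref{nice-map-between-projectives} to split $\Ker l$ off as a direct summand of $P_L$, both use injectivity of $f$ to show $\Ker l$ lands in $\Omega L$, and both conclude via a degree argument (the paper phrases it as $\Hom_R(\Ker l,\Omega L)_0=0$ since $\Omega L$ sits in strictly positive degrees, while you phrase it as a nonzero summand of $P_L$ failing to lie in $P_L\mathfrak{r}$ — two formulations of the same observation).
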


\begin{proof} We may assume without loss of generality that both $L$
  and $M$ are generated in degree zero. We have the following
  commutative diagram with exact rows:
\[
\xymatrix{ 0\ar[r]& \Omega L\ar[r]\ar[d]^{\Omega f}&P_L
  \ar[r]^{p}\ar[d]^{l}&L\ar[r]\ar[d]^{f}&0\\
  0\ar[r]& \Omega M\ar[r]&P_M \ar[r]&M\ar[r]&0 }\]
Assume that $i\colon\Ker l\to P_L$ is not zero, and observe that both
$\Omega L$ and $\Omega M$ are generated in positive degrees. By Lemma
\ref{nice-map-between-projectives}, this kernel is isomorphic to a
direct summand of $P_L$, hence $\Hom_R(\Ker l,\Omega L)_0=0$. This
contradicts the fact that the composition $fpi=0$. So $l$ is a
monomorphism and the Snake Lemma implies
that $\Omega f$ is also a monomorphism.
\end{proof}

\noindent As an immediate consequence, we obtain:

\begin{prop}\label{successive monos} Let $R$ be a Koszul algebra and
  let $M$ be an indecomposable linear $R$-module. Let $n\geqslant 1$
  and let $0\ne \eta\colon P_M^n\rightarrow M$ represent an element of
  $\Delta_M^n$ and, for each $i\geqslant 0$, let $l^i\colon
  P_M^{n+i}\rightarrow P_M^i$ be liftings of $\eta$ with entries in
  $R_0$. Assume that $l^j$ is a monomorphism for some $j\geqslant
  0$. Then all the subsequent liftings $l^{j+k}$ with $k\geqslant 0$
  are monomorphisms.
\end{prop}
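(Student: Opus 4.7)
My plan is to prove the statement by induction on $k$, with the base case $k=0$ given. The inductive step reduces to showing that if $l^j$ is a monomorphism whose entries lie in $R_0$, then $l^{j+1}$ is also a monomorphism. Since by hypothesis each lifting in the chain already has entries in $R_0$, the hypothesis of the inductive step propagates automatically, and iterating handles all $l^{j+k}$.

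To run the inductive step I would analyze $\ker l^{j+1}$ from two different angles. First, from the commutative square $l^j\circ d_M^{n+j+1}=d_M^{j+1}\circ l^{j+1}$, any $x\in\ker l^{j+1}$ satisfies $l^j(d_M^{n+j+1}(x))=0$, so injectivity of $l^j$ forces $d_M^{n+j+1}(x)=0$; hence $\ker l^{j+1}\subseteq \ker d_M^{n+j+1}=\Omega^{n+j+2}M$. Second, $l^{j+1}\colon P_M^{n+j+1}\to P_M^{j+1}$ has internal degree $-n$ and all entries in $R_0$, so after shifting the source by $n$ it becomes a degree zero homomorphism between projective modules both generated in degree $j+1$. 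Lemma \ref{nice-map-between-projectives} then yields a splitting $P_M^{n+j+1}=\ker l^{j+1}\oplus P'$, and in particular identifies $\ker l^{j+1}$ as a projective module generated in degree $n+j+1$.

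To finish I would combine the two facts by a degree count. Since $M$ is linear, $\Omega^{n+j+2}M$ is generated in degree $n+j+2$ and so has no nonzero homogeneous elements in degrees strictly below $n+j+2$. But $\ker l^{j+1}$ is generated in degree $n+j+1$; if it were nonzero it would contain nonzero homogeneous elements in degree $n+j+1$ lying inside $\Omega^{n+j+2}M$, which is impossible. Hence $\ker l^{j+1}=0$, and the induction carries through. The only delicate point I foresee is the grading bookkeeping when invoking Lemma \ref{nice-map-between-projectives}; once the shift by $n$ on the source is properly accounted for, the rest of the argument is a short, clean consequence of the linearity of the resolution of $M$.
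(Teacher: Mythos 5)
Your proof is correct and rests on the same two ingredients the paper uses: Lemma \ref{nice-map-between-projectives} applied to $l^{j+1}$ (after the degree shift) to split off $\ker l^{j+1}$ as a projective summand of $P_M^{n+j+1}$ generated in degree $n+j+1$, and the linearity of the resolution to see that such a summand cannot sit inside $\Omega^{n+j+2}M$, which lives in degrees $\geqslant n+j+2$. The only difference is organizational: the paper routes this through the auxiliary Proposition on degree-zero monomorphisms between non-projective modules inducing monomorphisms on projective covers and syzygies, whereas you inline that argument directly.
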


\begin{proof} Assume that the lifting $l^j$ is a monomorphism. This
  means that the induced morphism
  $f^{j+1}\colon\Omega^{n+j+1}M\to\Omega^{j+1}M$ is also a
  monomorphism so we have the following commutative diagram:
\[
\xymatrix{
0\ar[r]& \Omega^{n+j+2}M\ar[r]\ar[d]^{f^{j+2}}&P^{n+j+1}_M
\ar[r]\ar[d]^{l^{j+1}}&\Omega^{n+j+1}M\ar[r]\ar[d]^{f^{j+1}}&0\\
0\ar[r]& \Omega^{j+2}
M\ar[r]&P^{j+1}_M
\ar[r]&\Omega^{j+1}M\ar[r]&0
}\]
\noindent where the lifting $l^{j+k}$ is a matrix with entries in
$R_0$ by Lemma \ref{successive liftings}. Clearly $l^{j+1}\ne 0$ since
$f^{j+1}$ is a monomorphism. By the previous result, $l^{j+2}$ is also
a monomorphism and so is $f^{j+2}$. The result follows now by
induction. \end{proof}

\noindent It turns out that one can use Lemma
\ref{nice-map-between-projectives} to show that in some interesting
cases we have a similar behavior to the one in Proposition
\ref{successive monos} when one of the liftings is an epimorphism:

\begin{prop}\label{successive--> epis} Let $R$ be a Koszul algebra and
  let $M$ be an indecomposable linear $R$-module of infinite
  projective dimension with the property that none of its syzygies has
  a nonzero projective summand. Assume that for some $n\geqslant 1$,
  $\eta\colon P_M^n\rightarrow M$ represents a nonzero element of
  $\Delta_M^n$. For each $i\geqslant 0$, let $l^i\colon
  P_M^{n+i}\rightarrow P_M^i$ be liftings of $\eta$ with entries in
  $R_0$. Assume that $l^j$ is an epimorphism for some $j\geqslant
  0$. Then all the previous liftings $l^{j-k}$ with $0\leqslant k\leqslant j$ are
  epimorphisms.
\end{prop}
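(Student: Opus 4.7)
The strategy is downward induction on $i$: assuming $l^{i+1}$ is an epimorphism for some $0\leqslant i<j$, I show that $l^i$ is also one. The base case $i=j$ is the hypothesis.

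From the linear resolution of $M$ we have short exact sequences $0\to\Omega^{m+1}M\to P^m_M\to\Omega^m M\to 0$ for every $m\geqslant 0$, and for every $k\geqslant 0$ the lifting $l^k$ induces a map $f^{k+1}\colon\Omega^{n+k+1}M\to\Omega^{k+1}M$ on the $(k+1)$-st syzygy (while $\eta$ itself induces the map $f^0\colon\Omega^n M\to M$). Applying the Snake Lemma to the diagram whose middle column is $l^{i+1}$ (with $f^{i+2}$ on the left and $f^{i+1}$ on the right) turns the hypothesis $\coker l^{i+1}=0$ into $\coker f^{i+1}=0$. Applying the Snake Lemma a second time to the analogous diagram whose middle column is $l^i$ (with $f^{i+1}$ on the left and $f^i$ on the right) then produces an isomorphism $\coker l^i\cong\coker f^i$, exhibiting $\coker l^i$ as a quotient of $\Omega^i M$.

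The key observation is that $\coker l^i$ is simultaneously projective. After the appropriate graded shift, $l^i$ is a degree zero homomorphism between projectives generated in the same degree, represented by a matrix with entries in $R_0$, so Lemma~\ref{nice-map-between-projectives} identifies $\coker l^i$ with a direct summand of $P^i_M$. Combining the two conclusions, $\coker l^i$ is a projective quotient of $\Omega^i M$; such a surjection splits, making $\coker l^i$ a direct summand of $\Omega^i M$. For $i\geqslant 1$ the hypothesis that no syzygy of $M$ has a nonzero projective summand forces $\coker l^i=0$; for $i=0$ the same conclusion follows because $M$ is indecomposable of infinite projective dimension, hence not projective and so with no nonzero projective summand. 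Therefore $l^i$ is an epimorphism, completing the induction.

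I expect the main obstacle to be setting up Lemma~\ref{nice-map-between-projectives} for $l^i$ correctly: one has to perform a graded shift so that $P^{n+i}_M$ and $P^i_M$ are generated in the same degree and $l^i$ becomes genuinely a degree zero map with entries in $R_0$. Once this is done, the remainder is a routine combination of the Snake Lemma with the universal splitting of surjections onto projectives, and the syzygy-summand hypothesis is precisely what is needed to eliminate the resulting projective cokernel.
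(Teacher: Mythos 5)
Your proof is correct and hinges on exactly the same key idea as the paper's: use Lemma~\ref{nice-map-between-projectives} to recognise $\coker l^i$ as projective, exhibit it as a quotient of $\Omega^iM$, and invoke the no-projective-summand hypothesis (plus indecomposability and infinite projective dimension for $i=0$) to kill it. The only difference is bookkeeping: where you apply the Snake Lemma twice to obtain $\coker f^{i+1}=0$ and then $\coker l^i\cong\coker f^i$, the paper argues more directly that the cokernel projection $\pi\colon P^{j-1}_M\to Q$ satisfies $\pi d_M^j l^j=\pi l^{j-1}d_M^{n+j}=0$, and surjectivity of $l^j$ then gives $\pi d_M^j=0$, so $\pi$ factors through $\Omega^{j-1}M$; either route produces the required surjection onto the projective cokernel, so the two proofs are essentially identical in substance.
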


\begin{proof} Assume that the lifting $l^{j-1}$ is not onto. Then, by
  \ref{nice-map-between-projectives}, its cokernel $Q$ is projective
  and generated in degree $j-1$. We have the following commutative
  diagram:
\[
\xymatrix{ & P^{j+n}_M\ar[r]\ar[d]^{l^{j}}&P^{j+n-1}_M
  \ar[r]\ar[d]^{l^{j-1}}&\Omega^{j+n-1}M\ar[r]\ar[d]^{f^{j-1}}&0\\
  & P^j_M\ar[r]^{d_M^j}&P^{j-1}_M
  \ar[r]\ar[d]^{\pi}&\Omega^{j-1}M\ar[r]&0\\
  &&Q&& }\]
Since $l^j$ is onto, we have that $\pi d_M^j=0$, so $\pi$ must factor
through the syzygy $\Omega^{j-1}M$. This induces an epimorphism
$\Omega^{j-1}M\to Q$ and we get a contradiction to our assumption.
\end{proof}

\begin{cor} Let $R$ be a selfinjective Koszul algebra and let $M$ be
  an indecomposable non projective linear $R$-module. Assume that for
  some $n\geqslant 1$, $\eta\colon P_M^n\rightarrow M$ represents a
  nonzero element of $\Delta_M^n$. For each $i\geqslant 0$, let
  $l^i\colon P_M^{n+i}\rightarrow P_M^i$ be liftings of $\eta$ with
  entries in $R_0$. Assume that $l^j$ is an epimorphism for some
  $j\geqslant 0$. Then all the previous liftings $l^{j-k}$ with $0\leqslant
  k\leqslant j$ are epimorphisms. \qed
\end{cor}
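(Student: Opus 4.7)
The plan is to reduce the statement to Proposition~\ref{successive--> epis} by verifying its two structural hypotheses for $M$: namely, that $M$ has infinite projective dimension, and that none of the syzygies of $M$ in a minimal graded projective resolution admits a nonzero projective summand. Once both are in place, the corollary follows at once by applying Proposition~\ref{successive--> epis} to $M$ and the given $\eta$.

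For the first hypothesis I would invoke the standard fact that over a selfinjective algebra a module of finite projective dimension must be projective. Since $M$ is assumed to be non-projective, this forces $\pd M = \infty$, so every $\Omega^i M$ is nonzero.

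For the second hypothesis I would argue by contradiction. Suppose some syzygy $\Omega^i M$ has a nonzero projective direct summand $Q$. Because $R$ is selfinjective, $Q$ is simultaneously injective. Now $\Omega^i M$ sits inside the projective cover $P_M^{i-1}$ of $\Omega^{i-1} M$, so $Q$ embeds as an injective submodule of $P_M^{i-1}$ and is therefore a direct summand of $P_M^{i-1}$; write $P_M^{i-1} = Q \oplus P'$. On the other hand, minimality of the projective cover forces $\Omega^i M \subseteq \mathfrak{r} P_M^{i-1}$, hence $Q \subseteq \mathfrak{r} P_M^{i-1}$. Then the restriction of the differential $P_M^{i-1} \twoheadrightarrow \Omega^{i-1} M$ to $P'$ is already surjective (the part coming from $Q$ lands in the radical and contributes nothing modulo the image of $P'$), which contradicts minimality of $P_M^{i-1}$ as the projective cover of $\Omega^{i-1} M$. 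Hence no syzygy of $M$ has a nonzero projective summand.

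The main (and only) obstacle is the syzygy-summand verification in the previous paragraph, and it is really just the standard observation that selfinjectivity collapses projectivity and injectivity, so that minimal resolutions over selfinjective algebras are automatically summand-free. With both hypotheses of Proposition~\ref{successive--> epis} confirmed for $M$, that proposition applies verbatim to the liftings $l^i$ of $\eta$ and yields that $l^{j-k}$ is an epimorphism for all $0 \leqslant k \leqslant j$, which is exactly the conclusion.
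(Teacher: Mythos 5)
Your proof is correct and is precisely the reduction the paper has in mind: the corollary carries no written proof because both hypotheses of Proposition~\ref{successive--> epis} are standard consequences of selfinjectivity, exactly the two facts you verify. One small simplification in the second verification: once you know $Q$ is a direct summand of $P_M^{i-1}$ and that minimality forces $Q \subseteq \Omega^i M \subseteq \mathfrak{r}P_M^{i-1}$, you already have a contradiction, since a nonzero direct summand of a (finitely generated) projective cannot lie in its radical; alternatively, $Q \subseteq \Omega^i M = \ker(d\colon P_M^{i-1}\to\Omega^{i-1}M)$ gives $d|_Q = 0$ outright, which is cleaner than ``lands in the radical.''
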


\section{Modules with bounded Betti numbers}

In this section we study periodic modules, and more generally, modules
with bounded Betti numbers, that is, having complexity one. The set-up
is more general than Koszul in the finite dimensional selfinjective
case. We start with the following:

\begin{prop} Let $R$ be a selfinjective algebra and let $M$ be an
  indecomposable module. The following statements are equivalent.
\begin{enumerate}
\item[(a)] The module $M$ is periodic.
\item[(b)] The module $M$ has complexity one and there exists a
  non-nilpotent element in $\Ext^*_R(M,M)$ of positive degree.
\end{enumerate}
\end{prop}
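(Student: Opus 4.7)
The plan is to prove the two implications separately.

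For $(a)\Rightarrow(b)$: if $M$ is periodic with period $p\geqslant 1$, that is $\Omega^p M\cong M$, then the Betti numbers of $M$ are $p$-periodic and in particular bounded, so $M$ has complexity one. Since $R$ is selfinjective there is a natural identification $\Ext^p_R(M,M)\cong \underline{\Hom}_R(\Omega^p M,M)$ under which the periodicity isomorphism $\Omega^p M\xrightarrow{\sim}M$ determines a class $\eta\in\Ext^p_R(M,M)$. Under Yoneda multiplication, $\eta^k$ is represented by the composite of $k$ syzygy-shifted copies of this same isomorphism, hence is itself an isomorphism and in particular nonzero, so $\eta$ is non-nilpotent.

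For $(b)\Rightarrow(a)$: let $\eta\in\Ext^n_R(M,M)$ be homogeneous, non-nilpotent, of degree $n\geqslant 1$, represented by $f\colon\Omega^n M\to M$. Then $M$ cannot be projective, and since $R$ is selfinjective with $M$ indecomposable non-projective, each minimal syzygy $\Omega^i M$ is indecomposable non-projective. Complexity one provides a uniform upper bound $B$ on $\dim_{\Bbbk}\Omega^i M$. Writing $f_i=\Omega^{(i-1)n}(f)\colon\Omega^{in}M\to\Omega^{(i-1)n}M$, the Yoneda power $\eta^k$ is represented by the composite $f_1\circ f_2\circ\cdots\circ f_k\colon\Omega^{kn}M\to M$; non-nilpotency of $\eta$ ensures that this composite is nonzero in stable Hom, and therefore nonzero as a morphism in $\md R$, for every $k\geqslant 1$.

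The key step is now the Harada-Sai lemma: in the Krull-Schmidt category $\md R$, a composition of $2^B-1$ or more consecutive non-isomorphisms between indecomposable modules of length at most $B$ must vanish. Since every composite $f_1\circ\cdots\circ f_k$ is nonzero, some $f_i$ must be an isomorphism in $\md R$, giving $\Omega^{in}M\cong\Omega^{(i-1)n}M$. Applying the auto-equivalence $\Omega^{-(i-1)n}$ on the stable category yields $\Omega^n M\cong M$ stably; and since $\Omega^n M$ and $M$ are both indecomposable with no projective direct summands, this stable isomorphism promotes to a genuine module isomorphism $\Omega^n M\cong M$ in $\md R$, exhibiting $M$ as periodic of period dividing $n$.

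The main obstacle is the careful bookkeeping between $\Ext^*_R(M,M)$ and stable Hom: one must verify that non-nilpotency of $\eta$ translates to non-vanishing of the honest $\Hom$-level composite (so Harada-Sai applies in the ambient module category rather than only the stable quotient), and that the stable isomorphism between syzygies lifts to a bona fide module isomorphism; both points rely crucially on the selfinjective hypothesis and on the fact that $M$ is indecomposable non-projective so that its minimal syzygies inherit the same property.
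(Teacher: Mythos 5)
Your proposal is correct and follows essentially the same route as the paper's proof: for (a)$\Rightarrow$(b) you produce a non-nilpotent class from the periodicity isomorphism via the $\Ext^n \cong \underline{\Hom}(\Omega^n M, M)$ identification, and for (b)$\Rightarrow$(a) you represent powers of $\eta$ as composites of syzygy-lifted maps between indecomposable syzygies of bounded length and invoke the Harada--Sai lemma, then use selfinjectivity to pass from $\Omega^{(j+1)n}M \cong \Omega^{jn}M$ back to $\Omega^n M \cong M$. Your version simply spells out a few details the paper leaves implicit (non-projectivity of $M$, the stable-vs-honest $\Hom$ distinction, and the promotion of stable to genuine isomorphism).
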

\begin{proof} Assume that the module $M$ is periodic of period $n$ for
  some $n\geqslant1$, so $M\cong\Omega^nM$.  Then $M$ has
  complexity one and the extension
\[0\to M\to P_M^{n-1}\to \cdots \to P_M^1\to P_M^0\to M\to
0\]
induced from the minimal projective resolution of $M$ gives rise to a
non-nilpotent element in $\Ext^*_R(M,M)$ of positive degree.  For the reverse
implication, assume that the module $M$ has complexity one and that
there is a non-nilpotent element $\eta$ of positive degree $n$ in
$\Ext^*_R(M,M)$.  Since $\Ext^{k}_R(M,M)$ can be identified with
$\underline\Hom_R(\Omega^kM,M)$ for each $k\geqslant 1$, we may consider
$\eta$ as being a homomorphism that we also denote by $\eta\colon
\Omega^nM\to M$.  Then a power $\eta^t$ of $\eta$ is represented by
the composition
\[f_t=\eta\Omega^n(\eta)\cdots
\Omega^{(t-1)n}(\eta),\]
\sloppy where $\Omega^{jn}(\eta)$ are liftings of $\eta$.  Since
$\eta^t$ is nonzero for all $t\geqslant 1$, the homomorphism $f_t$ is
nonzero for all $t\geqslant 1$.  Since all the modules $\Omega^{jn}M$
are indecomposable, and their lengths are bounded, we infer by the
Harada-Sai Lemma that one of the homomorphisms $\Omega^{jn}_R(\eta)$
is an isomorphism.  That is, $\Omega^{(j+1)n}M$ is isomorphic to
$\Omega^{jn}M$ for some $j$ with $1\leqslant j \leqslant t - 1$.
Since $R$ is selfinjective, it follows that $M$ is periodic of period
dividing $n$.
\end{proof}

\noindent Since tensoring a minimal projective $R^e$-resolution of $R$
with $R/\mathfrak r$ yields a minimal projective $R$-resolution of
$R/\mathfrak r$, it follows that, viewed as an $R^e$-module, $R$ has
complexity one if and only if all the simple $R$-modules have
complexity one.  The following is a direct consequence of our
discussion.

\begin{cor}\label{cor:periodic}
Let $R$ be a selfinjective algebra. Then the following statements are
equivalent.
\begin{enumerate}
\item[(a)] $R$ is a periodic algebra.
\item[(b)] As an $R^e$-module, $R$ has complexity one, and there
  exists a non-nilpotent element of positive degree in the Hochschild
  cohomology ring of $R$.
\item[(c)] All the simple $R$-modules have complexity one, and there
  exists a non-nilpotent element of positive degree in the Hochschild
  cohomology ring of $R$.
\end{enumerate}
\end{cor}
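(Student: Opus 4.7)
The plan is to derive this corollary from the preceding Proposition applied to $M=R$ viewed as a module over its enveloping algebra $R^e$, together with the observation made in the paragraph preceding the statement. The equivalence (b) $\Leftrightarrow$ (c) is essentially that observation: a minimal projective $R^e$-resolution of $R$, tensored with $R/\mathfrak r$, yields a minimal projective $R$-resolution of $R/\mathfrak r$, so the two complexity conditions coincide; the clause about $\HH^*(R)$ is common to both (b) and (c).

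For (a) $\Rightarrow$ (b), if $R\cong \Omega^n_{R^e} R$ for some $n\geqslant 1$, then $R$ has complexity one as an $R^e$-module, and the chosen isomorphism gives an element $\eta\in\HH^n(R)$ whose powers $\eta^t$ correspond to the induced isomorphisms $\Omega^{tn}R\cong R$; in particular $\eta^t\neq 0$ for all $t\geqslant 1$, so $\eta$ is non-nilpotent of positive degree in $\HH^*(R)$.

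For the substantive direction (b) $\Rightarrow$ (a), I would run the argument of the preceding Proposition on $M=R$ over $R^e$, using the fact that $R^e$ is selfinjective whenever $R$ is. Given a non-nilpotent $\eta\in\HH^n(R)$ with $n\geqslant 1$, view it as a morphism $\eta\colon\Omega^n R\to R$ in the stable category and form the compositions $f_t=\eta\cdot\Omega^n(\eta)\cdots\Omega^{(t-1)n}(\eta)\colon \Omega^{tn}R\to R$, each of which is nonzero since $\eta$ is non-nilpotent. The complexity-one hypothesis bounds the composition lengths of the modules $\Omega^{jn}R$ uniformly, so each of them decomposes as a direct sum of a bounded number of indecomposable $R^e$-modules of bounded length.

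The main obstacle is that $R$ need not be indecomposable over $R^e$, so the Harada-Sai Lemma does not apply verbatim as in the Proposition to force some $\Omega^{jn}(\eta)$ to be an isomorphism. This is handled by combining Krull-Schmidt with Harada-Sai applied to the finitely many indecomposable summands that can occur: the bounded-length summands produce only finitely many isomorphism classes of modules of the form $\Omega^{jn}R$, so since the $f_t$ never vanish one obtains $\Omega^{an}R\cong \Omega^{bn}R$ for some $a<b$. Because $R^e$ is selfinjective the syzygy functor $\Omega$ is an autoequivalence of the stable category, so we may cancel to conclude $\Omega^{(b-a)n}R\cong R$, which is precisely the statement that $R$ is a periodic algebra.
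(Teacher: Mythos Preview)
Your overall strategy matches the paper's: the corollary is presented there as a ``direct consequence'' of the preceding Proposition applied to $M=R$ over $R^e$, together with the paragraph on complexity. Your handling of (b)~$\Leftrightarrow$~(c) and of (a)~$\Rightarrow$~(b) is correct, and you rightly supply the ingredient the paper leaves implicit, namely that $R^e$ is selfinjective whenever $R$ is.

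You are also right to notice that the Proposition is stated for an \emph{indecomposable} module, so some care is needed. But your workaround has a genuine gap. The assertion that ``the bounded-length summands produce only finitely many isomorphism classes of modules of the form $\Omega^{jn}R$'' is unjustified and false in general: over an infinite field there are typically infinitely many pairwise non-isomorphic modules of any fixed length. The Harada--Sai Lemma bounds the length of a chain of non-isomorphisms between \emph{indecomposables} of bounded length; it says nothing about the number of isomorphism classes. Even if you trace a nonzero path through the matrix decomposition of the maps $\Omega^{jn}(\eta)$ and locate, via Harada--Sai, an isomorphism between indecomposable \emph{summands} of two consecutive syzygies, that does not force $\Omega^{jn}(\eta)$ itself to be an isomorphism, nor does it yield $\Omega^{an}R\cong\Omega^{bn}R$.

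The clean fix is to observe that $\End_{R^e}(R)\cong Z(R)$, so $R$ is indecomposable as an $R^e$-module precisely when $R$ is indecomposable as an algebra; under that standing assumption the Proposition applies verbatim and no further argument is needed. If one drops indecomposability the statement itself becomes delicate: for $R=R_1\times R_2$ with $R_1$ periodic and $R_2$ a selfinjective algebra of complexity one that is not periodic, condition~(b) would hold while (a) fails---and whether such an $R_2$ can exist is exactly the open question raised in the Remark immediately following the corollary. So the decomposable case cannot be dispatched by the methods at hand, and your Harada--Sai/pigeonhole sketch does not close that gap.
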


\begin{Remark} It would be interesting to know whether a selfinjective
  algebra $R$ with the property that it has complexity one over its
  enveloping algebra, is necessarily itself a periodic algebra.
  Specifically, is the existence of a non-nilpotent element of
  positive degree in the Hochschild cohomology simply a consequence?
  Let us assume for a moment that the ground field is algebraically
  closed. Using geometric methods, Dugas has proved in \cite{D1} that
  if a simple module over a selfinjective algebra has complexity one,
  then that simple module must be periodic. Hence all the simple
  modules having complexity one over $R$, are periodic. In the finite
  representation case, he also proved in \cite{D2} that this is
  equivalent to the algebra being periodic.  Not much is known in the
  infinite representation case. Note that we also have the following
  result: ``If all the simple modules are periodic, then the algebra
  is selfinjective" by \cite{GSS}.\end{Remark}

\noindent We recall from \cite{GSS} that if every simple $R$-module is
periodic, and if the base field is algebraically closed, then there
exists an automorphism $\varphi$ of $R$ such that the $R^e$-modules
$\Omega_{R^e}^nR$ and $_1 R_\varphi$ are isomorphic for some integer
$n\geqslant 1$. We have the following:

\begin{prop}
  Let $R$ be a selfinjective algebra over an algebraically closed
  field and assume that all the simple modules are periodic.  Let
  $\varphi$ be an automorphism of $R$ such that the $R^e$-modules
  $\Omega_{R^e}^nR$ and $_1 R_\varphi$ are isomorphic for some integer
  $n\geqslant 1$. The following statements are equivalent.
\begin{enumerate}
\item[(a)] The Hochschild cohomology ring of $R$ has a non-nilpotent
  element of positive degree.
\item[(b)] $R$ is a periodic algebra.
\item[(c)] The $R$-bimodules $R$ and $_1 R_{\varphi^t}$ are isomorphic
  for some integer $t\geqslant 1$.
\item[(d)] The automorphism $\varphi^t$ is inner for some integer
  $t\geqslant 1$.
\end{enumerate}
\end{prop}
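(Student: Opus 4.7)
The plan is to establish the cycle of equivalences by proving $(a)\Leftrightarrow(b)$, $(c)\Leftrightarrow(d)$, and finally $(b)\Leftrightarrow(c)$. The first of these falls out of Corollary~\ref{cor:periodic} with no extra work: the standing assumption that every simple $R$-module is periodic already forces all simples to have complexity one, so the remaining content of Corollary~\ref{cor:periodic}(c) is exactly condition (a). The equivalence $(c)\Leftrightarrow(d)$ is the standard twisted-bimodule dictionary: an $R^e$-isomorphism ${}_1R_{\sigma}\cong R$ is determined by the image $u$ of $1$, and unraveling the two $R$-actions shows that $u$ must be a unit of $R$ with $\sigma(r)=uru^{-1}$, and conversely any such $u$ defines an isomorphism.

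The substantive step is $(b)\Leftrightarrow(c)$. The key input is that twisting by $\varphi$, that is the functor $M\mapsto M\otimes_R {}_1R_\varphi$, is an exact autoequivalence of the category of $R^e$-modules which sends (minimal) projective resolutions to (minimal) projective resolutions; consequently it commutes with the syzygy operator $\Omega_{R^e}$. Applying this functor to the given isomorphism $\Omega_{R^e}^{n}R\cong {}_1R_\varphi$ and using ${}_1R_\varphi\otimes_R {}_1R_\varphi\cong {}_1R_{\varphi^2}$, an easy induction on $k$ yields
\[
\Omega_{R^e}^{kn}R\;\cong\;{}_1R_{\varphi^k}\qquad\text{for every } k\geqslant 1.
\]

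Granted this identity, both directions are immediate. For $(c)\Rightarrow(b)$: if $R\cong {}_1R_{\varphi^t}$ for some $t\geqslant 1$ then $R\cong \Omega_{R^e}^{nt}R$, so $R$ is periodic. For $(b)\Rightarrow(c)$: if $R$ is periodic of period $p$, choose any $t\geqslant 1$ with $p\mid nt$; then $\Omega_{R^e}^{nt}R\cong R$ by periodicity while $\Omega_{R^e}^{nt}R\cong {}_1R_{\varphi^t}$ by the displayed formula, so $R\cong {}_1R_{\varphi^t}$.

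The main obstacle is the bookkeeping around the syzygy-twist compatibility: one must check that the displayed isomorphisms hold as $R^e$-modules and not merely in the stable category. This follows because neither $R$ nor ${}_1R_{\varphi^k}$ has any nonzero projective $R^e$-summand (such a summand would force $R$ to be separable, hence of finite global dimension, contradicting the existence of a non-nilpotent positive-degree element in $\HH^*(R)$ that the hypothesis tacitly allows for). Once this point is handled, the proof reduces to combining Corollary~\ref{cor:periodic}, the standard twisted-bimodule correspondence, and the inductive identity above.
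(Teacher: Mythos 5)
Your proposal is correct and follows essentially the same route as the paper: use Corollary~\ref{cor:periodic} to link (a) with (b), iterate the given isomorphism $\Omega_{R^e}^n R \cong {}_1R_\varphi$ by twisting to link (b) with (c), and invoke the standard twisted-bimodule correspondence (which the paper cites to Curtis--Reiner) for (c) and (d). You organize the equivalences as (a)$\Leftrightarrow$(b), (b)$\Leftrightarrow$(c), (c)$\Leftrightarrow$(d), whereas the paper closes a cycle (a)$\Rightarrow$(b)$\Rightarrow$(c)$\Rightarrow$(a) and handles (c)$\Leftrightarrow$(d) separately, but the content is identical; your spelled-out inductive identity $\Omega_{R^e}^{kn}R \cong {}_1R_{\varphi^k}$ is exactly what makes the paper's ``This follows immediately'' and ``Straightforward'' go through. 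One small wobble: your final paragraph worries that the iterated isomorphism might only hold in the stable category and attempts to rule out projective $R^e$-summands by arguing that such a summand would force $R$ to be separable. That step is shakier than needed (it implicitly requires an indecomposability reduction) and in fact the concern does not arise: $-\otimes_R {}_1R_\varphi$ is a $\Bbbk$-linear autoequivalence of $\Mod R^e$, so it carries projective covers to projective covers and hence minimal projective resolutions to minimal projective resolutions; applying it to a minimal resolution of $R$ therefore gives the minimal resolution of ${}_1R_\varphi$, and $\Omega_{R^e}^{kn}R \cong {}_1R_{\varphi^k}$ is an honest $R^e$-module isomorphism with no stable/unstable discrepancy to patch.
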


\begin{proof}
(a) implies (b): This follows from Corollary~\ref{cor:periodic} since all the simple modules have complexity one. (b) implies (c):  This follows immediately from the fact that
$\Omega_{R^e}^nR\cong {_1 R_{\varphi}}$. (c) implies (a): Straightforward.

(c) equivalent with (d):  Well-known, see \cite[Theorem 55.11]{CR}.  
\end{proof}

\noindent  As a corollary of \cite[Theorem 1.4]{GSS}  and
  the remark following that result, we also have the following:
\begin{prop}
If $R=\Bbbk\mathcal Q/I$ is a finite dimensional algebra over a finite field and all
simple $R$-modules are periodic, then $R$ is a periodic algebra. \qed
\end{prop}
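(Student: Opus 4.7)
The plan is to read the proposition as a direct consequence of the structural result of \cite{GSS} together with the fact that automorphism groups of finite sets are finite. By the cited Theorem 1.4 of \cite{GSS} (and its accompanying remark, which extends the algebraically closed version used in the previous proposition to this more general setting), the hypothesis that all simple $R$-modules are periodic produces an integer $n\geqslant 1$ and an algebra automorphism $\varphi\colon R\to R$ such that there is an isomorphism of $R$-bimodules
\[\Omega^n_{R^e}R\cong {_1R_\varphi}.\]
In particular $R$ is selfinjective (by \cite{GSS}), so the preceding proposition shows that periodicity of $R$ is equivalent to $\varphi^t$ being inner, equivalently to the bimodule isomorphism $R\cong {_1R_{\varphi^t}}$, for some $t\geqslant 1$.

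The key observation is now a counting argument: since $\Bbbk$ is finite and $R$ is finite dimensional over $\Bbbk$, the underlying set of $R$ is finite. The group $\operatorname{Aut}_{\Bbbk}(R)$ of $\Bbbk$-algebra automorphisms of $R$ embeds into the group of set-theoretic bijections of this finite set, and is therefore itself a finite group. Consequently $\varphi$ has finite order; choose $t\geqslant 1$ with $\varphi^t=\operatorname{id}_R$. The identity automorphism is trivially inner (conjugation by $1$), so condition (d) of the previous proposition is satisfied.

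Applying the equivalence (b)$\Leftrightarrow$(d) of that proposition yields that $R$ is periodic; concretely, ${_1R_{\varphi^t}}=R$ as $R$-bimodules, and iterating the isomorphism $\Omega^n_{R^e}R\cong {_1R_\varphi}$ gives $\Omega^{nt}_{R^e}R\cong {_1R_{\varphi^t}}\cong R$, so the period of $R$ divides $nt$.

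The only genuine subtlety is ensuring that the structural result of \cite{GSS} producing the twisted bimodule ${_1R_\varphi}$ is available over a finite base field rather than only over an algebraically closed field; this is precisely the content of the remark following \cite[Theorem 1.4]{GSS} that we invoke, and once it is in hand the rest of the argument is immediate from the finiteness of $\operatorname{Aut}_{\Bbbk}(R)$. No further analysis of the Hochschild cohomology ring is required, because in the finite field setting periodicity of $\varphi$ replaces the existence of a non-nilpotent element of positive degree used in the algebraically closed case.
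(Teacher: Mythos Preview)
Your proposal is correct and matches the paper's intended argument: the paper presents this proposition with no proof beyond the \qed, relying entirely on \cite[Theorem 1.4]{GSS} and its accompanying remark, and you have spelled out precisely the mechanism behind that citation---the finiteness of $\Bbbk$ forces $\operatorname{Aut}_{\Bbbk}(R)$ to be finite, so the twisting automorphism $\varphi$ has finite order and iteration gives $\Omega^{nt}_{R^e}R\cong R$. One small stylistic point: you appeal to the equivalence (b)$\Leftrightarrow$(d) of the preceding proposition, whose statement carries an algebraically closed hypothesis, but your direct iteration argument already bypasses this and stands on its own, so the reference is unnecessary rather than problematic.
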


\noindent Now we show there are situations where periodic modules give
rise to non-nilpotent elements that are actually in the {\it graded
  center} of their $\Ext$-algebra, and consequently, by \ref{gr-cent},
give rise to non-nilpotent elements in the Hochschild cohomology ring
of the algebra itself. Recall that an algebra $R$ satisfies the
\textup{\textbf{Fg}} condition (see \cite{EHSST} for instance), if its
Ext-algebra $\mathcal E_R$ is a finitely generated $H$-module for some
commutative Noetherian graded subalgebra $H\subseteq \HH^*(R)$ with
$H^0=\HH^0(R)$.  There is a homomorphism of graded rings
$\gamma_M\colon \HH^*(R)\to \Ext^*_R(M,M)$ so $\Ext^*_R(M,M)$ becomes
an $H$-module via the action of $\gamma_M$. If $M$ is a periodic
$R$-module then its Ext-algebra is nonzero, so by \cite[Proposition
2.1]{SS}, its annihilator in $H$ is a proper ideal. By
\cite[Proposition 2.1]{EHSST}, the variety of $M$, $V_H(M)$ has Krull
dimension one, since $M$ being periodic, has complexity one. So there
are nonzero homogeneous elements in $H$ of arbitrary large degrees.

\begin{prop}
  Let $R$ be a selfinjective algebra over an algebraically closed
  field satisfying \textup{\textbf{Fg}}.  If $M$ is a periodic module,
  then there exists a non-nilpotent element of positive degree in the graded
  center of the $\Ext$-algebra of $M$ and also, there
  exists a non-nilpotent element of positive degree
  in the Hochschild cohomology ring of $R$.
\end{prop}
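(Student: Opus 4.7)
The discussion preceding the proposition already supplies the main ingredients: a commutative Noetherian graded subalgebra $H\subseteq \HH^*(R)$ over which $\Ext^*_R(M,M)$ is a finitely generated module via the characteristic homomorphism $\gamma_M$, the fact that $\mathrm{Ann}_H(\Ext^*_R(M,M))$ is a proper ideal of $H$, and the fact that $V_H(M)$ has Krull dimension one because $M$ has complexity one. The plan is to extract from this a single element of $H$ whose image behaves correctly in both rings.

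First, I would pass to the graded quotient $A:=H/\mathrm{Ann}_H(\Ext^*_R(M,M))$. By the \textbf{Fg} hypothesis $A$ is a commutative Noetherian $\mathbb{N}$-graded $\Bbbk$-algebra with $A_0$ finite-dimensional, and by the Krull dimension computation $\dim A=\dim V_H(M)=1$. The key elementary observation is that such an $A$ must contain a non-nilpotent homogeneous element of positive degree: otherwise every element of $A_{>0}$ would be nilpotent, so $A_{>0}$ would lie in the nilradical $\mathfrak n$ of $A$, and the quotient $A/\mathfrak n$ would be a quotient of the finite-dimensional ring $A_0$, contradicting $\dim A=1$.

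Next, I would lift such an element to $z\in H$ of positive degree. By construction no power of $z$ lies in $\mathrm{Ann}_H(\Ext^*_R(M,M))$, and since the $H$-module structure on $\Ext^*_R(M,M)$ is given by Yoneda multiplication by $\gamma_M$, this is equivalent to saying that $\gamma_M(z)^k=z^k\cdot 1\neq 0$ in $\Ext^*_R(M,M)$ for every $k\geqslant 1$. Consequently $\gamma_M(z)$ is a non-nilpotent element of positive degree in $\Ext^*_R(M,M)$, and $z$ itself is non-nilpotent of positive degree in $\HH^*(R)$ since $\gamma_M$ is a ring homomorphism. The remaining point is that $\gamma_M(z)$ lies in the graded center of $\Ext^*_R(M,M)$; this follows from the classical fact that the image of the characteristic morphism $\gamma_M\colon \HH^*(R)\to\Ext^*_R(M,M)$ is contained in $Z_{\gr}(\Ext^*_R(M,M))$.

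The main obstacle, to the extent that there is one, is the Krull-dimension step: one must know that \textbf{Fg} together with complexity one forces $H/\mathrm{Ann}_H(\Ext^*_R(M,M))$ to be $1$-dimensional, which is exactly what the paper invokes from \cite[Proposition~2.1]{EHSST}. Everything else is elementary commutative algebra plus the standard graded-center property of $\gamma_M$, so once the support-variety input is in hand the argument is immediate.
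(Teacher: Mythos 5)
Your proof is correct, and it takes a genuinely different route from the paper's. The paper's argument is constructive in the style of Erdmann--Holloway--Snashall--Solberg--Taillefer: starting from a homogeneous element $\eta\in\HH^n(R)$ with $n\geqslant 1$, one forms the pushout module $M_\eta$ and, following the proof of \cite[Theorem 5.3]{EHSST}, chooses $\eta$ so that the variety of $M\otimes_R M_\eta$ is trivial and hence $M\otimes_R M_\eta$ is projective; then $\gamma_M(\eta)$ is realized concretely as the class of the first $n$ terms of a minimal projective resolution of $M$, which is a manifestly non-nilpotent element of the graded center of $\Ext^*_R(M,M)$. Your argument instead stays entirely inside the commutative algebra of $H$: you pass to $A=H/\mathrm{Ann}_H(\Ext^*_R(M,M))$, use the support-variety input ($\dim A = \dim V_H(M)=1$, finite-dimensionality of $A_0$, Noetherianness from \textbf{Fg}) and an elementary nilradical observation to produce a non-nilpotent homogeneous element of positive degree in $A$, lift it to $z\in H$, and note that $\gamma_M(z)^k=z^k\cdot 1\neq 0$ for all $k$ because no power of $z$ annihilates the $H$-module $\Ext^*_R(M,M)$. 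Both approaches rest on the same geometric input (complexity one forcing $\dim V_H(M)=1$) and both close by citing $\operatorname{im}(\gamma_M)\subseteq Z_{\gr}(\Ext^*_R(M,M))$; the paper's version gives an explicit class that is visibly a ``period'' of $M$ and fits the pushout machinery used elsewhere in that literature, while yours is shorter and avoids the pushout technology entirely. The only points worth spelling out, which your sketch implicitly uses, are that $\mathrm{Ann}_H(\Ext^*_R(M,M))$ and the nilradical of $A$ are homogeneous ideals (both standard for graded modules over $\mathbb{N}$-graded Noetherian rings), so that the step from ``every homogeneous element of $A_{>0}$ is nilpotent'' to ``$A_{>0}\subseteq\mathfrak{n}$'' is legitimate.
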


\begin{proof}
  Assume that $M$ is a periodic module.  By the above remarks the
  dimension of the variety of $M$ is one, and there are nonzero
  homogeneous elements in $H$ of arbitrary large degrees.  Let $\eta$
  be a homogeneous element of degree $n\geqslant 1$ in the Hochschild
  cohomology ring $\HH^*(R)$ represented as a homomorphism
  $\eta\colon \Omega^n_{R^e}R\to R$. We can form the following
  pushout,
\[
\xymatrix{
0\ar[r] & \Omega^n_{R^e}R\ar[r]\ar[d]^\eta & P^{n-1} \ar[r]\ar[d] &
\Omega^{n-1}_{R^e}R\ar[r]\ar@{=}[d] & 0\\
0\ar[r] & R\ar[r] & M_\eta\ar[r] & \Omega^{n-1}_{R^e}R\ar[r] & 0
}\]
where
\[0\to \Omega^n_{R^e}R\to P^{n-1}\to \cdots\to P^1\to P^0\to R\to
0\]
is the start of a minimal projective resolution of $R$ over $R^e$.  By
\cite[Proposition 4.3]{EHSST}, the variety of the module
$M\otimes_RM_\eta$ is given by the ideal
defining the variety of $M$ and the element $\eta$.  As in the proof
of \cite[Theorem 5.3]{EHSST}, we may choose $\eta$ such that the
variety of $M\otimes_R M_\eta$ is trivial.  Hence $M\otimes_R M_\eta$ is a
projective $R$-module, again by \cite[Theorem 2.5]{EHSST}.

\smallskip

\noindent The image of $\eta$ under the homomorphism $\gamma_M\colon \HH^*(R)\to
\Ext^*_R(M,M)$ is given by
\[\gamma_M(\eta)\colon 0 \to M\to M\otimes_R M_\eta\to M\otimes_R P^{n-2}\to
\cdots \to M\otimes_R P^0\to M\to 0\]
and $\gamma_M(\eta)$ is contained in the graded center of
$\Ext^*_R(M,M)$ by \cite[Corollary 1.3]{SS}. Since $M\otimes_RM_\eta$
is projective, the extension $\gamma_M(\eta)$ is given by the first
$n$ terms in a minimal projective resolution of $M$ (and the period
of $M$ is a divisor of $n$). Therefore $\gamma_M(\eta)$ is a
non-nilpotent element in the graded center of $\Ext^*_R(M,M)$.
Consequently $\eta\in \HH^*(R)$ is also a non-nilpotent element. This
completes the proof.
\end{proof}

\noindent We have the following immediate consequence of our
discussion so far:

\begin{prop} Let $R$ be a selfinjective algebra over an algebraically
  closed field. Assume that $R$ satisfies the \textup{\textbf{Fg}} condition. Then
  $R$ is periodic if and only if all the simple $R$-modules are
  periodic. \qed
\end{prop}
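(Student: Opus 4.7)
The plan is to verify the two implications by stitching together Corollary~\ref{cor:periodic} with the immediately preceding proposition, so essentially no new machinery is required.

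For the forward direction, I would take a periodic algebra $R$, fix a period $n\geqslant 1$ with $\Omega^n_{R^e}R\cong R$, and take a minimal projective $R^e$-resolution $P^\bullet\to R$. For any simple left $R$-module $S$, I would tensor this resolution over $R$ with $S$. Since every indecomposable projective $R^e$-module has the form $Re_i\otimes_{\Bbbk} e_jR$ with $e_i,e_j$ primitive, tensoring with $S$ yields a direct sum of copies of the projective $R$-module $Re_i$; using that $R$ is selfinjective (so $\Tor^R_{>0}(-,S)$ vanishes on $R^e$-projectives) and that the differentials have radical entries, this produces a minimal projective resolution of $S$. The periodicity of $P^\bullet$ transfers directly, giving $\Omega^n_R S\cong S$; hence every simple is periodic.

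For the reverse direction, assume every simple $R$-module is periodic. I would pick any simple module $S$; since $S$ is periodic we may apply the preceding proposition (which is where the \textbf{Fg} hypothesis is used) to obtain a non-nilpotent homogeneous element of positive degree in $\HH^*(R)$. At the same time, every simple $R$-module has complexity one, so condition~(c) of Corollary~\ref{cor:periodic} is satisfied. That corollary then yields~(a), namely that $R$ is a periodic algebra.

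There is really no serious obstacle here; the main point is that the preceding proposition has already done the heavy lifting of producing a non-nilpotent element of positive degree in Hochschild cohomology from a single periodic module, and Corollary~\ref{cor:periodic} translates this, combined with the complexity-one condition on simples, back into periodicity of $R$. The only minor care I would take is in the forward direction, to make sure the tensor product construction really produces a \emph{minimal} projective $R$-resolution of $S$, so that periodicity of the bimodule resolution passes faithfully to periodicity of $S$; this follows because the differentials of a minimal $R^e$-resolution have entries in $\mathfrak{r}(R^e)=R\otimes\mathfrak{r}+\mathfrak{r}\otimes R$, which tensored with $S$ still lands in $\mathfrak{r}\cdot(-)$.
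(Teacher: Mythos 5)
Your proof is correct and follows the same route the paper leaves implicit behind the \qed: the forward direction is the standard transfer of periodicity from a minimal bimodule resolution to the simple modules (stated in the paper just before Corollary~\ref{cor:periodic}), and the reverse direction combines the proposition immediately preceding the statement (which, under \textbf{Fg}, produces a non-nilpotent positive-degree class in $\HH^*(R)$ from any periodic module) with Corollary~\ref{cor:periodic}(c)$\Rightarrow$(a). One small imprecision: in the forward direction you attribute the $\Tor$-vanishing on projective $R^e$-modules to selfinjectivity, but this holds for any algebra, since a projective $R^e$-module is projective (hence flat) as a one-sided $R$-module; what one actually needs for exactness of $P^\bullet\otimes_R S$ is that the bimodule syzygies are flat right $R$-modules, which follows because $R$ itself is projective on the right, so the short exact sequences split as right modules. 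Your check that the differentials stay in the radical after applying $-\otimes_R S$ (using $\mathfrak{r}S=0$) is exactly the right point to secure minimality, and hence to read off $\Omega^n_R S\cong S$.
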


\noindent We give now an example showing that the existence of a
periodic module $M$ over a selfinjective (even Koszul) algebra $R$
does not necessarily imply the existence of a non-nilpotent element of
positive degree in the graded center of its $\Ext$-algebra. Obviously
the \textup{\textbf{Fg}} condition is not satisfied in this case, and the graded
center of the $\Ext$-algebra of $M$ is spanned as a vector space by the
identity map $1_M$.

\begin{Example}
Let
\[R = \Bbbk(\xymatrix{ 1\ar@(dl,ul)^x\ar@(ur,dr)^y})/\langle x^2, xy +
qyx, y^2\rangle\]
be the quantum plane, where $\Bbbk$ is a field and $q$ is nonzero and not
a root of unity in $\Bbbk$.  Let $M$ be the $R$-module given by the
representation
\[
\xymatrix{ \Bbbk^2\ar@(dl,ul)^{\left(\begin{smallmatrix}0 & 1\\ 0 &
        0\end{smallmatrix}\right)}
\ar@(ur,dr)^{\left(\begin{smallmatrix}0 & 0\\ 0 & 0\end{smallmatrix}\right)}}
\]
Then $R$ is a selfinjective algebra of dimension four, and $M$ is an
indecomposable periodic $R$-module of period one.  As a $\Bbbk$-vector
space, the endomorphism ring of $M$, is spanned by the set $\{ 1_M,
f=\left(\begin{smallmatrix} 0 & 1\\ 0 & 0\end{smallmatrix}\right)\}$.
The homomorphism
\[\mu=\left(\begin{smallmatrix} 1 & 0\\ 0 &
    -q\end{smallmatrix}\right)\]
is an isomorphism $\mu\colon \Omega^1_RM\to M$. Consider the
pushout
\[\xymatrix{
0\ar[r] & \Omega_R^1M \ar[r]^-\iota\ar[d]^\mu & P^0\ar[r]^\pi\ar@{=}[d] &
M\ar[r]\ar@{=}[d] & 0\\
0\ar[r] & M \ar[r]^-{\iota\mu^{-1}} & P^0\ar[r]^\pi & M\ar[r] & 0}
\]
\sloppy and denote the lower exact sequence again by $\mu$. Then, this
sequence together with the set $\{1_M,f\}$ generates $\Ext^*_R(M,M)$
as an algebra over $\Bbbk$.  The relations for these generators are
$f^2 = 0$ and $\mu f + q f\mu = 0$.  One can then show that the graded
center of $\Ext^*_R(M,M)$ is just the one dimensional $\Bbbk$-space
spanned by $1_M$.
\end{Example}

\noindent Our next aim is to describe another setting where having
non-nilpotent elements in the $\Ext$-algebra of a simple module
implies the existence of non-nilpotent elements of positive degree in the Hochschild
cohomology ring of the algebra.  We start with the following
preparatory result.

\begin{prop}
  Let $S$ be a graded algebra such that $S$ is a finitely generated
  module over $Z_{\gr}(S)$.  Assume that $S_{\geqslant 1}$ has a
  non-nilpotent element.  Then there exists a non-nilpotent element in
  $Z_{\gr}(S)_{\geqslant 1}$.
\end{prop}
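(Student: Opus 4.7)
\noindent The plan is to argue by contradiction: suppose that every homogeneous element of $Z_{\gr}(S)_{\geqslant 1}$ is nilpotent, and derive that the hypothetical non-nilpotent $s\in S_{\geqslant 1}$ is forced to be nilpotent after all. The first thing I would record is that in a graded-commutative ring the set of homogeneous nilpotent elements is a two-sided (graded) ideal; the usual sum-of-nilpotents calculation via a binomial-style expansion goes through because the graded-commutative signs become irrelevant as soon as some factor is raised past its nilpotency index, and absorption of multiples by graded-central nilpotents is clear. Under our assumption, this gives $Z_{\gr}(S)_{\geqslant 1}\subseteq N(Z_{\gr}(S))$.

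\smallskip

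\noindent The second step is to cut down to a Noetherian situation. Pick homogeneous generators $g_1,\dots,g_m$ of $S$ as a $Z_{\gr}(S)$-module, and let $Z'\subseteq Z_{\gr}(S)$ be the $\Bbbk$-subalgebra generated by the finitely many coefficients arising from expressions $g_ig_j=\sum_k z_{ij}^k g_k$, $1=\sum_i u_ig_i$, and $s=\sum_i s_ig_i$. As a finitely generated graded-commutative $\Bbbk$-algebra, $Z'$ is Noetherian, and $S':=\sum_i Z'g_i$ is a graded subring of $S$ that contains $s$ and is finitely generated as a $Z'$-module. By the previous paragraph, $Z'_{\geqslant 1}$ lies in the nilradical of $Z'$, which, since $Z'$ is Noetherian, is nilpotent as an ideal; hence there exists $t\geqslant 1$ with $(Z'_{\geqslant 1})^t=0$.

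\smallskip

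\noindent I would then finish with a degree count. Writing $s^p=\sum_i a_i^{(p)}g_i$ with $a_i^{(p)}\in Z'$ of degree $p\deg s-\deg g_i$, once $p$ is chosen so that $p\deg s>\max_i\deg g_i$ each $a_i^{(p)}$ sits in $Z'_{\geqslant 1}$, and hence $s^p\in Z'_{\geqslant 1}\cdot S'$. Therefore
\[s^{tp}=(s^p)^t\in (Z'_{\geqslant 1}\cdot S')^t\subseteq (Z'_{\geqslant 1})^t\cdot S'=0,\]
contradicting the non-nilpotence of $s$. The main obstacle I expect is simply bookkeeping the graded-commutative signs in the two delicate spots, namely the ideal property of nilpotents in $Z_{\gr}(S)$ and the collection step $(Z'_{\geqslant 1}\cdot S')^t\subseteq (Z'_{\geqslant 1})^t\cdot S'$, which requires moving $Z'_{\geqslant 1}$-factors past $S'$-factors using graded centrality; once these signs are confirmed to be harmless, the argument reduces to the classical fact that a positive-degree ideal of homogeneous nilpotents in a Noetherian graded commutative ring is nilpotent.
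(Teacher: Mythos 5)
Your proof is correct, but it takes a genuinely different route from the paper's. The paper's argument stays elementary: it writes $z^t=\sum_i f_ix_i$ with the $x_i$ homogeneous of positive degree in $Z_{\gr}(S)$, expands $z^{tj}=(z^t)^j$, uses graded-centrality to collect the $x_i$'s into monomials $x_1^{i_1}\cdots x_n^{i_n}$ with $i_1+\cdots+i_n=j$, and concludes by pigeonhole that if every $x_i$ were nilpotent then $z^{tj}$ would vanish for $j$ large. That argument only needs the finitely many $x_i$'s produced by the module generators and never appeals to Noetherianity. You instead assume every positive-degree element of $Z_{\gr}(S)$ is nilpotent, cut down to a finitely generated (graded-commutative, hence Noetherian) subalgebra $Z'$ containing all the relevant structure constants, invoke the fact that a positive-degree ideal of nilpotents in a Noetherian graded-commutative ring is nilpotent, and then do a degree count on $s^{tp}$. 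Your Noetherian reduction is a standard and robust device and makes the nilpotence of the ideal conceptually transparent, but it carries more machinery (closure of $S'$ under multiplication, Noetherianity of graded-commutative finitely generated algebras via the even part, nilpotence of the nilradical) than the paper's direct combinatorial count. Both are valid; the paper's is shorter and more self-contained, yours packages the mechanism as a familiar commutative-algebra fact.

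One small point worth making explicit in your write-up: both you and the paper implicitly treat the chosen non-nilpotent element of $S_{\geqslant 1}$ as having a well-defined degree; this is harmless because a non-nilpotent element of $S_{\geqslant 1}$ has a non-nilpotent homogeneous component of positive degree (look at the top nonzero homogeneous component of its powers), so one may assume homogeneity from the outset.
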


\begin{proof}
Let $\{f_1,\ldots,f_n\}$ be a finite set of homogeneous generators for
$S$ as a module over $Z_{\gr}(S)$.  Let $m=\max\{\deg(f_i)\}_{i=1}^n$.  Let
$z$ be a non-nilpotent element in $S_{\geqslant 1}$.  Choose
$t$ such that $\deg(z^t) > m$. Then
\[z^t = f_1x_1 + \cdots + f_nx_n\]
for some $x_i$ in $Z_{\gr}(S)$ with all the nonzero $x_i$ in
$Z_{\gr}(S)_{\geqslant 1}$.  Then for each $j\geqslant 1$, we have
$z^{tj}\neq 0$ and it can be written as
\[
z^{tj} = \sum_{i_1,i_2,\ldots,i_n}
w_{i_1,i_2,\ldots,i_n}(f_1,\ldots,f_n) x_1^{i_1}x_2^{i_2}\cdots x_n^{i_n}
\]
where $w_{i_1,i_2,\ldots,i_n}(f_1,\ldots,f_n)$ are linear combinations
of words over $\mathbb{Z}$ in the set $\{ f_1,\ldots,f_n\}$.  Since
$j$ can be arbitrary large, it follows that there exists some $k$ such that
$x_k$ is non-nilpotent in $Z_{\gr}(S)_{\geqslant 1}$.  This completes
the proof.
\end{proof}

\noindent Letting $S=\mathcal E_R$ be the Ext-algebra of $R$, the next
result follows immediately, using the fact that
there is a surjective homomorphism from the Hochschild cohomology ring
of a Koszul algebra $R$ onto the graded center of the Koszul dual
$\mathcal E_R$.

\begin{cor}
  Let $R$ be a Koszul algebra such that $\mathcal E_R$ is a finitely
  generated $Z_{\gr}(\mathcal E_R)$-module. If there exists a
  non-nilpotent element in $(\mathcal E_R)_{\geqslant 1}$, then exists a
  non-nilpotent element of positive degree in the Hochschild
  cohomology ring of $R$. \qed
\end{cor}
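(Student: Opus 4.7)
The plan is to chain together the previous proposition with the surjection from the Hochschild cohomology ring onto the graded center of the Koszul dual. More explicitly, I would apply the preceding proposition with $S=\mathcal E_R$: the hypotheses match exactly, since $\mathcal E_R$ is assumed to be finitely generated over $Z_{\gr}(\mathcal E_R)$ and $(\mathcal E_R)_{\geqslant 1}$ contains a non-nilpotent element. This immediately produces a non-nilpotent homogeneous element $z\in Z_{\gr}(\mathcal E_R)_{\geqslant 1}$.

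Next, I would invoke the map $T\colon \HH^*(R)\to \mathcal E_R$ introduced just before Theorem~\ref{gr-cent}. As recalled there (and as remarked in the paragraph preceding the corollary), since $R$ is Koszul the image of $T$ is exactly $Z_{\gr}(\mathcal E_R)$, so $T$ restricts to a surjective graded algebra homomorphism $T\colon \HH^*(R)\twoheadrightarrow Z_{\gr}(\mathcal E_R)$. Choose a homogeneous preimage $\eta\in \HH^{n}(R)$ of $z$, where $n=\deg(z)\geqslant 1$; this is possible because $T$ respects the grading and is surjective.

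The final step is to observe that $\eta$ is non-nilpotent. Indeed, if $\eta^k=0$ in $\HH^*(R)$ for some $k\geqslant 1$, then applying the ring homomorphism $T$ yields $z^k=T(\eta)^k=T(\eta^k)=0$, contradicting the choice of $z$. Hence $\eta$ is a non-nilpotent element of positive degree in the Hochschild cohomology ring of $R$, which is what was required.

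No step seems to present a genuine obstacle: the work has been done by the previous proposition and by the surjectivity statement packaged in Theorem~\ref{gr-cent}. The only mild point to be careful about is to pick a \emph{homogeneous} preimage so that the element witnessed in $\HH^*(R)$ genuinely lies in positive degree; this is automatic once one uses that both $T$ and its image are graded.
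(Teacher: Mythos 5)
Your proof is correct and follows the same route the paper intends: apply the preceding proposition with $S=\mathcal E_R$ to obtain a non-nilpotent element in $Z_{\gr}(\mathcal E_R)_{\geqslant 1}$, then lift it along the surjection $T\colon\HH^*(R)\twoheadrightarrow Z_{\gr}(\mathcal E_R)$ and note that a ring homomorphism sends a nilpotent to a nilpotent. This is exactly the argument the paper leaves implicit.
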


\noindent We return now to the case where $R$ is a Koszul algebra. Let
$M$ be an indecomposable linear module over $R$. We have seen in Proposition~\ref{successive monos}
that if $n\geqslant 1$, and if $\eta\colon P_M^n\rightarrow M$
represents an element of $\Delta_M^n$ with the property that one of
its liftings is a monomorphism, then all subsequent liftings are also
monomorphisms. We have the following:

\begin{thm}\label{l-mono} Let $R$ be a Koszul algebra and let $M$ be
  an indecomposable linear $R$-module. Let $n\geqslant1$ and let
  $\eta\colon P_M^n\rightarrow M$ represent an element of
  $\Delta_M^n$. Assume that for some $j$, the lifting $l^j$ of $\eta$
  having entries in $R_0$ is a monomorphism. Then there exists a
  positive integer $n_0$ such that there is an isomorphism
  $P_M^{n_0n+j}\to P_M^{kn+j}$ of degree $-kn+n_0n$ for each
  $k\geqslant n_0$. Consequently, $M$ is eventually periodic.  In
  particular, if $R$ is also selfinjective, then $M$ is periodic of
  period dividing $n$.
\end{thm}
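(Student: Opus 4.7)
The plan is to invoke Proposition~\ref{successive monos} to promote the monomorphism property to every subsequent lifting, combine this with Lemma~\ref{nice-map-between-projectives} so that each $l^i$ splits a direct summand off $P_M^i$, and then apply a finiteness argument to force the $l^i$ to become isomorphisms from some index onward.

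By Proposition~\ref{successive monos}, every lifting $l^{j+i}$ with $i\geqslant 0$ is a monomorphism represented by a matrix with entries in $R_0$. Applying Lemma~\ref{nice-map-between-projectives}, after the necessary graded shift so that source and target are generated in the same degree, each such $l^{j+i}$ exhibits $P_M^{n+j+i}$ as a direct summand of $P_M^{j+i}$. Since $R_0=\Bbbk\times\cdots\times\Bbbk$ has only finitely many primitive idempotents, the multiplicity of each indecomposable projective in $P_M^\bullet$ is a non-negative integer. Fixing a residue class $r$ modulo $n$ with $0\leqslant r<n$, the sequence $k\mapsto(\text{multiplicities in } P_M^{j+r+kn})$ is non-increasing in $k$, and therefore stabilizes. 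Taking $n_0$ as the maximum of the stabilization indices across the $n$ residue classes ensures that $l^i$ is in fact an isomorphism for every $i\geqslant n_0 n+j$. Composing the isomorphism liftings $l^{n_0 n+j},l^{(n_0+1)n+j},\ldots,l^{(k-1)n+j}$ (and, if necessary, inverting) produces the claimed isomorphism between $P_M^{n_0 n+j}$ and $P_M^{kn+j}$ for each $k\geqslant n_0$, with the appropriate graded shift.

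For eventual periodicity, I would truncate the chain map $l^{\bullet}\colon P_M^{\bullet+n}\to P_M^{\bullet}$ at index $s=n_0 n+j$. Above $s$ every component $l^i$ is an isomorphism, so the truncated chain map is an isomorphism of complexes. The unique non-zero homology of each truncated complex sits at position $s$ and equals the syzygy $\Omega^s M$, respectively $\Omega^{s+n} M$, so we obtain $\Omega^{s+n}M\cong\Omega^s M$ (up to a degree shift of $-n$). Hence $\Omega^s M$ is periodic of period dividing $n$, and $M$ is eventually periodic. If moreover $R$ is selfinjective, $\Omega$ is an autoequivalence of the stable module category, so $\Omega^{s+n}M\cong\Omega^s M$ stably implies $\Omega^n M\cong M$ stably; a minimal syzygy of an indecomposable non-projective module over a selfinjective algebra has no projective summand, so this stable isomorphism lifts to an honest module isomorphism $M\cong\Omega^n M$, showing $M$ is periodic of period dividing $n$.

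The main obstacle will be ensuring the stabilization is uniform across all $n$ residue classes modulo $n$, so that from $i=n_0 n+j$ onward \emph{every} $l^i$ is an isomorphism, rather than only those in the arithmetic progression $j+n\mathbb{Z}$. This uniformity is what promotes the chain map to an isomorphism of the truncated subcomplex and lets us pass from isomorphism of projective modules to isomorphism of syzygies.
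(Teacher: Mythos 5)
Your proof is correct and rests on the same pillars as the paper's: promote $l^j$ being a monomorphism to all subsequent $l^i$ via Proposition~\ref{successive monos}, then use Lemma~\ref{nice-map-between-projectives} to realize each $l^i$ as a split injection of graded projectives, and use finiteness of Betti numbers to force eventual stabilization. The one place you genuinely diverge is the passage from ``$l^i$ is eventually an isomorphism'' to ``some syzygy of $M$ is periodic.'' You stabilize every residue class modulo $n$ simultaneously, so that the chain map $l^\bullet\colon P_M^{\bullet+n}\to P_M^\bullet$ becomes a degreewise isomorphism beyond $n_0n+j$, and then identify the remaining homology as the two syzygies. The paper, by contrast, only stabilizes the single arithmetic progression $j, j+n, j+2n,\ldots$ (which is all that is needed to produce the isomorphism $P_M^{n_0n+j}\to P_M^{kn+j}$ asserted in the statement), and then applies the Snake Lemma to the single commutative square with middle vertical arrow the isomorphism $l^{n_0n+j}$ and right vertical arrow the already-known monomorphism $\Omega^{(n_0+1)n+j}M\to\Omega^{n_0n+j}M$: this forces that right arrow to be an isomorphism, and eventual periodicity follows from one syzygy isomorphism alone. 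So the potential obstacle you flag at the end --- needing uniform stabilization across all $n$ residue classes --- is real for \emph{your} truncation argument, and you handle it correctly (there are finitely many residue classes, each sequence of multiplicities is non-increasing, so take the max of the stabilization indices). But it is worth knowing that the paper's Snake Lemma trick sidesteps that issue entirely, giving a slightly leaner argument from the same ingredients. The selfinjective upgrade is handled in the same way in both.
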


\begin{proof} Using Proposition~\ref{successive monos} and the notation therein,
each lifting $l^m\colon P_M^{n+m} \to P_M^{m}$ for
  $m\geqslant j$ is a monomorphism of degree $-n$, and therefore each
  induced map $f^{m+1}\colon\Omega^{n+m+1}M\to\Omega^{m+1}M$ is also a
  monomorphism of degree $-n$ for $m\geqslant j$. We have for each
  $k\geqslant 1$, chains of monomorphisms of degree $-n$ given by our
  liftings: $$P_M^{kn+j}\hookrightarrow
  P_M^{(k-1)n+j}\hookrightarrow\cdots\hookrightarrow
  P_M^{n+j}\hookrightarrow P_M^j,$$ \noindent
  and $$\Omega^{kn+j}M\hookrightarrow\Omega^{(k-1)n+j}M\hookrightarrow\cdots\hookrightarrow
  \Omega^{n+j}M\hookrightarrow\Omega^jM.$$

\noindent In the finite dimensional case, the result is now obvious,
  so we prove the general case. To show eventual periodicity, we only
  need to look at the number of indecomposable summands of the
  projective modules involved to see that there exists a positive
  integer $n_0$ such we have an isomorphism
  $P_M^{n_0n+j}((k-n_0)n)\cong P_M^{kn+j}$ for each $k\geqslant
  n_0$. But $l^{n_0n+j}$ being an isomorphism of degree $-n$ implies
  that $f^{s-n}\colon\Omega^sM\to\Omega^{s-n}M$ is a monomorphism for
  $s \geqslant n_0n+j$.  By applying the Snake Lemma to
\[
\xymatrix{
0\ar[r]& \Omega^{(n_0+1)n+j+1}M\ar[r]\ar[d]&P^{(n_0+1)n+j}_M
\ar[r]\ar[d]^{l^{n_0n+j}}&\Omega^{(n_0+1)n+j}M\ar[r]\ar[d]&0\\
0\ar[r]& \Omega^{n_0n+j+1}
M\ar[r]&P^{n_0n+j}_M
\ar[r]&\Omega^{n_0n+j}M\ar[r]&0
}\]
we see that $\Omega^{(n_0+1)n+j}M\to \Omega^{n_0n+j}M$ is both a
monomorphism and an epimorphism.  It follows that $M$ is eventually
periodic.  If $R$ is selfinjective, $M$ being eventually periodic
implies that there is a degree $-n$ isomorphism $\Omega^nM\to M$ so
$M$ is in fact periodic.
\end{proof}

\noindent Let us also recall the following well-known result.

\begin{lemma}\label{zero-iso} Let $R=R_0\oplus R_1\oplus R_2\oplus\cdots$ be a positively
  $\mathbb Z$-graded algebra with $R_0=\Bbbk\times\cdots\times \Bbbk$
  and let $M$ and $N$ be finitely generated graded modules generated
  in degree $0$.  Suppose $f\colon M\to N$ is an $R$-isomorphism and
  write $f=\sum_if_i$, where $f_i\colon M\rightarrow N$ is a degree
  $i$ homomorphism from $M$ to $N$. Then $(f_0)_{|M_0}\colon M_0\to
  N_0$ is an isomorphism.
\end{lemma}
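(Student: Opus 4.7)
The plan is to identify $(f_0)|_{M_0}$ with the isomorphism that $f$ induces on the tops $M/\mathfrak{r}M$ and $N/\mathfrak{r}N$, where $\mathfrak{r} = \bigoplus_{i\geqslant 1} R_i$ denotes the graded radical of $R$. Since $M$ and $N$ are generated in degree $0$ and $R$ is positively graded with $R_0 = \Bbbk\times\cdots\times\Bbbk$, one has $M_k \subseteq \mathfrak{r}M$ and $N_k \subseteq \mathfrak{r}N$ for every $k\geqslant 1$, while no homogeneous element of $\mathfrak{r}M$ or $\mathfrak{r}N$ has nonzero degree-$0$ component. In particular the tops $M/\mathfrak{r}M$ and $N/\mathfrak{r}N$ are concentrated in degree $0$, and the canonical projections restrict to isomorphisms $M_0\xrightarrow{\cong} M/\mathfrak{r}M$ and $N_0\xrightarrow{\cong} N/\mathfrak{r}N$.

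With these identifications in hand, we would argue as follows. For $i\geqslant 1$ the component $f_i|_{M_0}$ has image in $N_i\subseteq \mathfrak{r}N$, while for $i<0$ it vanishes because $N_i=0$. Hence composing $f|_{M_0}$ with the projection $N\twoheadrightarrow N/\mathfrak{r}N$ picks out only the contribution of $f_0$, and under the identifications above this composition is nothing but $(f_0)|_{M_0}\colon M_0\to N_0$. On the other hand, since $f$ is an $R$-isomorphism, it induces an isomorphism $\bar f\colon M/\mathfrak{r}M\to N/\mathfrak{r}N$; transporting $\bar f$ through the identifications of the previous paragraph recovers $(f_0)|_{M_0}$, which is therefore an isomorphism.

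No real obstacle is anticipated; the only point worth taking care of is to verify cleanly that the graded radical behaves as claimed on modules generated in degree $0$, and that the decomposition $f = \sum_i f_i$ interacts with the projection onto the top in the stated way. Once those identifications are in place, the conclusion is immediate from the fact that $\bar f$ is an isomorphism.
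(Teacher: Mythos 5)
The paper states this lemma as a well-known fact and supplies no proof of its own, so there is nothing to compare against directly. Your argument is correct and is the standard one: since $M$ and $N$ are generated in degree $0$ over a positively graded $R$ with semisimple $R_0$, one has $\mathfrak r M=\bigoplus_{k\geqslant 1}M_k$, hence the projection restricts to an isomorphism $M_0\xrightarrow{\cong}M/\mathfrak r M$ (and likewise for $N$); the $R$-isomorphism $f$ descends to an isomorphism $\bar f\colon M/\mathfrak r M\to N/\mathfrak r N$, and on $M_0$ the composite $M_0\hookrightarrow M\xrightarrow{f}N\twoheadrightarrow N/\mathfrak r N$ kills the contributions of $f_i$ for $i\neq 0$ (those with $i<0$ are already zero on $M_0$, those with $i>0$ land in $\mathfrak r N$), leaving exactly $(f_0)|_{M_0}$ under the identifications. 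All the needed observations — that $M_i=0$ for $i<0$, that $\mathfrak r M$ has no degree-$0$ part, that $M_k\subseteq\mathfrak r M$ for $k\geqslant 1$ because $M$ is generated in degree $0$ — are correctly invoked, so the proposal is complete and sound.
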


\noindent Let $R$ be a Koszul algebra and let $M$ be a linear
module. Let $(P^{\bullet}_M,d_M^{\bullet})$ be a linear resolution of
$M$, and let $n\geqslant 1$. Denote by $\pi_n$ the projection of
$P_M^n$ onto $\Omega^nM$ induced by the differential $d_M^n$. It is
clear that having a map $\eta\colon P_M^n\to M$ such that $\eta
d_M^{n+1}=0$ is equivalent to having a map
$\overline{\eta}\colon\Omega^nM\to M$ such that
$\overline{\eta}\pi_n=\eta$. So we may think of $\eta$ and
$\overline{\eta}$ as representing the same element in
$\Ext_R^n(M,M)$. We can summarize our discussion:

\begin{thm}\label{summary} Let $R$ be a Koszul algebra and let $M$ be an indecomposable linear $R$-module
of infinite projective dimension.
Consider the following statements:
\begin{enumerate}
\item The module $M$ is periodic.
\item There exist $n\geqslant 1$ and $\eta\in\Delta^n_M$ such that
some lifting of $\eta$ is an isomorphism.
\item There exist $n\geqslant 1$ and $\eta\in\Delta^n_M$ such that
some lifting of $\eta$ is a monomorphism.
\item The module $M$ is eventually periodic.
\end{enumerate}
Then
\[
\xymatrix{( 1) \ar@{=>}[r]& (2) \ar@{<=>}[r] &(3)\ar@{=>}[r]&(4)
}\]
Moreover, if $R$ is selfinjective, then $(1)-(4)$ are all equivalent.
\end{thm}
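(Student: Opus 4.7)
My plan is to assemble the four implications $(1) \Rightarrow (2) \Rightarrow (3) \Rightarrow (4)$, together with the converse $(3) \Rightarrow (2)$, from results already established in this section; almost all of the content sits inside Theorem~\ref{l-mono}.

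For $(1) \Rightarrow (2)$, I would exploit linearity together with periodicity. Assuming $\Omega^n M \cong M$, the isomorphism is forced to have degree $-n$, because $\Omega^n M$ is generated in degree $n$ while $M$ is generated in degree $0$. Composing it with the canonical surjection $\pi_n\colon P_M^n \to \Omega^n M$ yields a homomorphism $\eta\colon P_M^n \to M$ of degree $-n$ with $\eta d_M^{n+1} = 0$, hence an element of $\Delta^n_M$. Since the minimal resolution of $M$ is itself periodic with $P_M^{n+i} \cong P_M^i(-n)$, one may choose each lifting $l^i\colon P_M^{n+i} \to P_M^i$ to be an isomorphism, giving~(2).

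The implication $(2) \Rightarrow (3)$ is immediate, since isomorphisms are monomorphisms. For the converse $(3) \Rightarrow (2)$, I would appeal to Theorem~\ref{l-mono}: under the hypothesis that some lifting $l^j$ is a monomorphism, the proof of that theorem produces an integer $n_0$ for which the lifting $l^{n_0 n + j}$ of the same $\eta$ is actually an isomorphism. The implication $(3) \Rightarrow (4)$ is then precisely the eventual-periodicity conclusion of Theorem~\ref{l-mono}.

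For the selfinjective case only $(4) \Rightarrow (1)$ remains. Here I would invoke the fact that $\Omega$ is an autoequivalence of the stable module category $\underline{\operatorname{mod}}\, R$: eventual periodicity gives $\Omega^{s+n} M \cong \Omega^s M$ for some $s \geqslant 0$ and $n \geqslant 1$, so applying $\Omega^{-s}$ in the stable category yields $\Omega^n M \cong M$ there, and this lifts to an honest isomorphism of modules because $M$ is indecomposable and non-projective (it has infinite projective dimension). The main obstacle in the whole argument is really the propagation from a single monomorphism lifting to a stabilization of projective-summand counts and hence to an isomorphism lifting; but that obstacle is exactly what Proposition~\ref{successive monos} and the Snake Lemma argument inside Theorem~\ref{l-mono} already dispatch, so the present theorem is reduced to bookkeeping.
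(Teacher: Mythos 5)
Your overall strategy matches the paper's: reduce the main content to Theorem~\ref{l-mono} and Proposition~\ref{successive monos}, observe that $(2)\Rightarrow(3)$ is trivial and $(3)\Rightarrow(2)$ comes for free from the proof of Theorem~\ref{l-mono}, and handle $(4)\Rightarrow(1)$ in the selfinjective case by the standard stable-category argument. Those parts are fine and are exactly what the paper does (the paper leaves $(4)\Rightarrow(1)$ to the reader, as you essentially do).

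There is, however, a genuine gap in your argument for $(1)\Rightarrow(2)$. You claim that an isomorphism $\Omega^n M \to M$ ``is forced to have degree $-n$'' because $\Omega^n M$ is generated in degree $n$ and $M$ is generated in degree $0$. That is not true in general. A module isomorphism $f\colon \Omega^n M \to M$ decomposes as a sum of homogeneous components $f = \sum_{i\geqslant -n} f_i$, and there is no reason for the components $f_i$ with $i > -n$ to vanish; the generator of $\Omega^n M$ in degree $n$ can be sent to an element of $M$ with nonzero components in several degrees, and such an element can still generate $M$. What is true, and what the paper proves as Lemma~\ref{zero-iso}, is that the degree-$(-n)$ component $f_{-n}$, restricted to the degree-$n$ part $(\Omega^n M)_n$, is an isomorphism onto $M_0$. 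It is this extracted homogeneous piece that represents a class $\eta \in \Delta^n_M$, and the fact that it is an isomorphism between the generating pieces is what forces the lifting $l^0\colon P_M^n \to P_M^0$ (an $R_0$-matrix) to be an isomorphism. Your argument needs this extraction step; as written, it assumes more than the hypotheses give. Once that is repaired, the rest of your $(1)\Rightarrow(2)$ argument is redundant: knowing $l^0$ is an isomorphism already gives $(2)$, and you do not need to observe that the resolution is periodic to choose all the later liftings as isomorphisms.
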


\begin{proof} To see that (1) implies (2), assume that $f\colon
  \Omega^nM\to M$ is an isomorphism of $R$-modules for some
  $n\geqslant 1$.  Shifting and applying Lemma \ref{zero-iso}, we see
  that $(f_0)_{|(\Omega^nM)_n}\colon(\Omega^nM)_n\to M_n$ is an
  isomorphism of degree $-n$.  Now $f_0$ represents an element
  $\eta\in\Delta^n_M$.  Since the restriction
  $(f_0)_{|(\Omega^nM)_n}\colon( \Omega^nM)_n\to M_n$ is an
  isomorphism, the matrix of the first lifting, $l^0\colon P_M^n\to
  P_M^0$ is an isomorphism.

\smallskip
\noindent It is clear that (2) implies (3).  On the other hand, the
proof of Theorem \ref{l-mono} shows that (3) implies (2). Theorem
\ref{l-mono} also shows that (3) implies (4).

\smallskip
\noindent Finally, if $R$ is selfinjective, (4) implies (1) and the
proof is complete.
\end{proof}

\noindent Let $M$ be a linear module and let $0\ne\eta\in\Delta^n_M$
where $n\geqslant 1$. We have seen in the previous section that we
have a commutative diagram
$$\xymatrix{
\cdots\ar[r]& P_M^{n+m} \ar[r]^{d_M^{m+n}} \ar[d]^{l^m} & \cdots\ar[r] & P_M^{n+1} \ar[r]^{d_M^{n+1}}
\ar[d]^{l^1} & P_M^n \ar[d]^{l^0}\ar[dr]^{\eta} & \\\cdots\ar[r]&  P_M^m \ar[r]^{\phantom{xx}d_M^m}
& \cdots\ar[r] & P_M^1 \ar[r]^{d_M^1} & P_M^0 \ar[r]^{d_M^0} & M
\\}$$
where each lifting $l^i$ is a matrix with entries in $R_0$.
Assume that for some $i$, the lifting $l^i$ is a degree~$-n$ isomorphism of projective $R$-modules. Then from the commutative diagram
$$\xymatrix{ P_M^{n+i+1}
\ar[rr]^-{d_M^{n+i+1}}
\ar[d]^{l^{i+1}} & & P_M^{n+i}\ar[d]^{l^i}\\P_M^{i+1} \ar[rr]^{d_M^n} & &P_M^i}$$
\noindent we get as an immediate application of the Snake Lemma an embedding $\Omega^{n+i+1}M\hookrightarrow\Omega^{i+1}M$ and an onto map $\Omega^{n+i}M\rightarrow\Omega^iM$.

\begin{cor}\label{cx1}Let $R$ be a Koszul algebra and let $M$ be an
  indecomposable linear $R$-module whose Betti numbers are all equal
  to 1. Then $M$ is eventually periodic if and only if $\Delta^n_M\ne
  0$ for some $n>0$.
\end{cor}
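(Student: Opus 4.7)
My proof plan is to establish the two directions of the biconditional separately, using Theorem~\ref{summary} together with the observation that the hypothesis ``all Betti numbers equal $1$'' forces each $P_M^i$ to be indecomposable.

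For the direction $(\Leftarrow)$, take a nonzero $\eta\in\Delta_M^n$ and let $l^0\colon P_M^n\to P_M^0$ be its first lifting with entries in $R_0$. Since $\eta=d_M^0 l^0\ne 0$, the map $l^0$ is nonzero; combined with the indecomposability of $P_M^n$, Lemma~\ref{nice-map-between-projectives} decomposes $P_M^n=\Ker l^0\oplus P'$ and forces $\Ker l^0=0$, so $l^0$ is a monomorphism. The implication $(3)\Rightarrow(4)$ of Theorem~\ref{summary} then gives that $M$ is eventually periodic.

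For the direction $(\Rightarrow)$, suppose a degree-zero graded isomorphism $\Omega^{k+n}M\cong\Omega^kM(-n)$ for some $k\geqslant 0$ and $n\geqslant 1$. Lifting to projective covers yields a degree $-n$ isomorphism $l^k\colon P_M^{k+n}\to P_M^k$, which extends forward to degree $-n$ isomorphisms $l^{k+j}\colon P_M^{k+n+j}\to P_M^{k+j}$ for all $j\geqslant 0$. The plan is then to extend this chain map backward to $l^{k-1},\ldots,l^0$ satisfying $l^{i-1}d_M^{n+i}=d_M^i l^i$, and to set $\eta:=d_M^0 l^0\in\Hom_R(P_M^n,M)$; the cocycle identity $\eta d_M^{n+1}=d_M^0 d_M^1 l^1=0$ is automatic, so the remaining point is to verify that $\eta\ne 0$.

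The hard step will be the nonvanishing of $\eta$, equivalently of $l^0$. Under the Betti-$1$ hypothesis each $l^i$ is either zero or a monomorphism by Lemma~\ref{nice-map-between-projectives}, so the task reduces to ruling out that the backward extension collapses below level $k$. The plan is to invoke the indecomposability of $M$ and the rigidity of linear Betti-$1$ resolutions in order to force the top idempotent of $P_M^0$ to reappear in the periodic tail of the resolution; if the descent at the given period $n$ collapses because the tops of $P_M^0$ and $P_M^n$ do not match, one can instead take a suitable multiple $n'=\ell n$ of the period so that the initial idempotent profile aligns with the periodic pattern, producing a nonzero $\eta\in\Delta_M^{n'}$.
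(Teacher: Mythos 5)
Your $(\Leftarrow)$ direction is correct and essentially the paper's argument: with all Betti numbers equal to $1$, each $P_M^i$ is indecomposable, a nonzero lifting $l^0$ of $\eta$ with entries in $R_0$ is forced by Lemma~\ref{nice-map-between-projectives} (after the obvious degree shift) to be a monomorphism (in fact an isomorphism), and Theorem~\ref{summary} finishes.

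Your $(\Rightarrow)$ direction has a genuine gap, which you only partly acknowledge. First, you assume you can extend the degree-$(-n)$ chain map \emph{backward} from $l^k$ down to $l^0$. Lifting a chain map forward is automatic from projectivity of the source complex, but extending backward means extending a map $\Omega^{n+i}M\to P_M^{i-1}$ along the inclusion $\Omega^{n+i}M\hookrightarrow P_M^{n+i-1}$; the obstruction lies in $\Ext^{n+i}_R(M,P_M^{i-1})$, which has no reason to vanish, so the backward chain map may simply not exist. Second, even granting existence, you defer the nonvanishing of $l^0$ to a ``hard step'' and offer only a heuristic about idempotent profiles and ``rigidity'' of Betti-$1$ resolutions; that is a plan, not a proof. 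The paper sidesteps both difficulties by first replacing $M$ with a syzygy $\Omega^{n_0}M$ that is genuinely periodic: the degree-$(-n)$ isomorphism $\Omega^n(\Omega^{n_0}M)\cong\Omega^{n_0}M(-n)$ lifts to an isomorphism $l$ of projective covers, and $\eta:=d^0l$ is then a nonzero element of $\Delta^n$ with $l^0=l$ by construction, so no backward extension and no separate nonvanishing argument are needed. That reduction to the periodic case is the idea you are missing.
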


\begin{proof} Assume first that there exists a nonzero element $\eta$
  of degree $n>0$ in $\Delta_M$. From the preceding results, each
  lifting $l^i$ of $\eta$ is a matrix with entries in $R_0$, and the
  Betti numbers of $M$ being equal to 1 imply that $l^0$ is an
  isomorphism. Therefore $M$ is eventually periodic by
  Theorem~\ref{summary}.

\smallskip
\noindent For the other direction, assume that $M$ is eventually
periodic of period $n>0$, so there is an integer $n_0$ such that there
is a degree $-n$ isomorphism $\Omega^{n_0+n}M\to\Omega^{n_0}M$. By
looking at $\Omega^{n_0}M$ instead of $M$ we may assume without loss
of generality that $M$ is itself periodic of period $n$. So we have a
commutative diagram
\[\xymatrix{ P_M^{n}
\ar[rr]\ar[d]^{l}&& \Omega^nM\ar[d]^{f}\\P_M^0 \ar[rr]^{d^0_M} &&M}\]
\noindent where $f$ and $l$ are degree $-n$ isomorphisms. In fact, $l$ can be viewed as a matrix with entries in $R_0$.  Letting $\eta=d^0_Ml$ and $l^0=l$ we see that $\eta\in\Delta^n_M$ and is nonzero.
\end{proof}

\begin{Remark} Note that by our proof of the `only if' direction of
  Corollary \ref{cx1}, the following more general result is true: ``If
  $R$ is a Koszul algebra, $M$ is an indecomposable linear $R$-module
  and $M$ is eventually periodic, then $\Delta^n_M\ne 0$ for some
  $n>0$."
\end{Remark}

\noindent The following result requires an element of $\Delta_M^n$ all
of whose liftings are nonzero. Recall that if $R$ is a selfinjective
Koszul algebra, then, by Corollary \ref{non-zero-liftings} if for some
$n\geqslant 1$ there exists a nonzero $\eta\in\Delta_M^n$, then all
the liftings $l^i$ of $\eta$ are nonzero.

\begin{prop}\label{betti1} Let $R$ be a Koszul algebra and let $M$ be
  an indecomposable linear
$R$-module such that the $i$-th Betti number of $M$ is $1$, for some $i\geqslant 1$.
If, for some $n<i$, $\Delta_M^n$ contains an element all of whose liftings
are nonzero, then $M$ is eventually periodic.
If, in addition, $R$ is selfinjective and $\Delta_M^n\ne 0$ for some
$n<i$, then $M$ is periodic of period dividing $n$.
\end{prop}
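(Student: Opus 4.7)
My plan is to combine the structural result on projective maps given by Lemma~\ref{nice-map-between-projectives} with the monomorphism propagation result Theorem~\ref{l-mono}. The strategy is to single out the lifting whose \emph{domain} is the indecomposable projective $P_M^i$ and deduce that this lifting is automatically a monomorphism.

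Concretely, let $\eta\colon P_M^n\to M$ represent a nonzero element of $\Delta_M^n$ all of whose liftings are nonzero, and choose liftings $l^j\colon P_M^{n+j}\to P_M^j$ having entries in $R_0$ (such a choice is possible by the discussion following Lemma~\ref{successive liftings}, since $\eta$ itself is a degree $-n$ map). Because $n<i$, the integer $j=i-n$ is nonnegative, and we obtain a nonzero degree $-n$ map
\[ l^{i-n}\colon P_M^i \longrightarrow P_M^{i-n} \]
whose entries lie in $R_0$. By hypothesis, $P_M^i$ is indecomposable (its Betti number is $1$), so applying Lemma~\ref{nice-map-between-projectives} to $l^{i-n}$ gives a decomposition $P_M^i = \Ker l^{i-n}\oplus P'$ as a sum of projectives. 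Since $P_M^i$ is indecomposable and $l^{i-n}\neq 0$, the kernel must vanish, so $l^{i-n}$ is a monomorphism. Theorem~\ref{l-mono} then applies with $j=i-n$ to conclude that $M$ is eventually periodic.

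For the second assertion, assume additionally that $R$ is selfinjective and that $\Delta_M^n\neq 0$ for some $n<i$. Pick any nonzero $\eta\in\Delta_M^n$; by Corollary~\ref{non-zero-liftings} all successive liftings $l^j$ are automatically nonzero, so the hypothesis of the first part is satisfied. The argument above produces a monomorphism $l^{i-n}$, and Theorem~\ref{l-mono} then yields that $M$ is periodic of period dividing $n$.

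I do not expect any real obstacle: the only subtle point is recognizing that the correct lifting to examine is the one \emph{starting} at $P_M^i$ rather than one landing there, so that the indecomposability of the domain, together with Lemma~\ref{nice-map-between-projectives}, forces injectivity directly. Once that is set up, both conclusions fall out of Theorem~\ref{l-mono}.
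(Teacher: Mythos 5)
Your proof is correct and follows essentially the same route as the paper: take $s=i-n$, observe that the lifting $l^s\colon P_M^i\to P_M^s$ is a nonzero degree $-n$ map with entries in $R_0$ out of the rank-one (hence indecomposable) projective $P_M^i$, deduce it is a monomorphism, and invoke Theorem~\ref{l-mono}. The only difference is that you spell out the injectivity step by explicitly appealing to Lemma~\ref{nice-map-between-projectives}, whereas the paper states it more tersely; the underlying reasoning is identical.
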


\begin{proof} Suppose that $\eta$ is a nonzero element of $\Delta_M^n$
  and $n<i$.  Let $(P_M^{\bullet}, d^{\bullet}_M)$ be a linear
  resolution of $M$ and let $l^j$ be liftings of $\eta$ as above. Let
  $s=i-n$.  Then $l^s\colon P_M^{s+n} \to P_M^{s}$ and
  $P_M^{s+n}=P_M^i$ has rank 1.  Since the liftings of $\eta$ are
  assumed to be nonzero, $l^s\colon P_M^i\to P_M^s$ is a monomorphism.
  The result now follows from Theorem \ref{l-mono}.
\end{proof}

\noindent We have the following example:

\begin{Example} First assume that $M$ is a linear module over a
  connected Koszul algebra such that the Betti numbers of $M$ are
  eventually $1$. Let $n_0$ be such that the rank of each $P_M^k$ is 1
  for each $k\geqslant n_0$. Let $n\geqslant n_0$ and let
  $0\ne\eta\in\Delta^n_M$. We may think of $\eta$ as being a degree
  $-n$ map from $\Omega^nM$ to $M$ that does not factor through a
  projective module, and which is given by a matrix whose entries are
  all in $\Bbbk$. But each syzygy $\Omega^nM$ of $M$ is cyclic so
  $\eta$ has to be onto as it maps $(\Omega^nM)_n$ onto $M_n$. In
  other words, $\Delta^n_M\ne 0$ if and only if there is an onto map
  $\Omega^nM\rightarrow M$.

\medskip
\noindent Let us apply this to the following specific situation.  Let
$R=\Bbbk\langle x,y\rangle/{\langle xy-qyx,x^2,y^2\rangle}$ where
$0\ne q\in\Bbbk$. Let $\overline x$ and $\overline y$ denote the
residue classes in $R$ of $x$ and $y$ respectively, and let $M$ be the
cokernel of the multiplication by $\overline x+\overline y$ viewed as
a map $R\rightarrow R$. A linear resolution of $M$ is given by the
following:
$$ \  \cdots \extto{}
R(-n)\extto{\overline x+(-q)^n\overline y} R(-n+1)\extto{\overline
  x+(-q)^{n-1}\overline y} \cdots \extto{} R(-1)\extto{\overline
  x+\overline y} R \to M\to  0.$$
\noindent This means that for each $n\geqslant 1$,
$\Omega^nM\cong\coker(\overline x+(-q)^n\overline y)$. If $q$ is not a
root of unity, then $\Omega^nM\ncong\Omega^mM$ for all $m\ne n$, and
they are all two-dimensional so $\Delta^n_M=0$ for all $n>0$. Hence
$\Delta_M=\Bbbk$ in this case. If $(-q)^n=1$, then $\Omega^nM\cong M$
and it is easy to see that $\Delta_M$ is infinite dimensional, but
finitely generated as a $\Bbbk$-algebra by its degree $0\leqslant k\leqslant n$
parts.
\end{Example}

\noindent Let $R = R_0 \oplus R_1 \oplus \cdots $ be a Koszul
$\Bbbk$-algebra where $R_0 = \Bbbk \times \cdots \times \Bbbk$.  Let
$M$ and $N$ be linear $R$-modules. In an analogous way to the diagonal
subalgebra which was introduced in Section~\ref{sec:two}, we define
the {\it diagonal module} of $\Ext^*_R(M,N)$ as
\[\Delta(M,N)=\bigoplus_{n\geqslant 0}\Ext^n_R(M,N)_{-n}.\]
For each nonnegative integer $n$ we also define
$\Delta^n(M,N)=\Ext^n_R(M,N)_{-n}$.
 Using this definition and the results on liftings from
  Section 3, we show that having some Betti numbers equal to $1$
  forces further occurrences of $1$ as a Betti numbers, in some
  cases.

\begin{prop}\label{prop:comparison_betti_numbers}
Let $R$ be a Koszul algebra.  Let $M$ and $N$ be two linear $R$-modules
with the following properties:
\begin{enumerate}
\item[(i)] no syzygy of $N$ has a nonzero projective direct summand,
\item[(ii)] there exists an element $\eta$ in $\Delta^i(M,N)$ for some
  $i \geqslant 0$ and there exists an element $\theta$ in
  $\Ext^n_R(N,L)$ for some $n \geqslant 1$ and some linear $R$-module
  $L$, such that the Yoneda product $\theta\eta \neq 0$,
\item[(iii)] $\beta_m(M)=\beta_n(N)=1$ for some $m$ with $i \leqslant m < n+i$.
\end{enumerate}
Then $\beta_{m-i}(N)=1$.
\end{prop}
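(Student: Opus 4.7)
My plan is to show that the $n$th lifting $l^n$ of $\eta$ is surjective, then propagate that surjectivity backwards via Proposition~\ref{successive--> epis} to the $(m-i)$th lifting $l^{m-i}$, and finally read off $\beta_{m-i}(N)$ by passing to tops. To set up, represent $\eta\in\Delta^i(M,N)$ by a matrix $P_M^i\to N$ whose entries lie in $N_0$, and, by the straightforward extension of Lemma~\ref{successive liftings}(2) to the $\Delta(M,N)$-setting, choose liftings $l^j\colon P_M^{i+j}\to P_N^j$, each of degree $-i$ with entries in $R_0$. Representing $\theta$ by a map $P_N^n\to L$, the Yoneda product $\theta\eta$ is represented by $\theta\circ l^n$, which is nonzero by hypothesis (ii); in particular $l^n\neq 0$.

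Since $\beta_n(N)=1$, the projective $P_N^n$ is indecomposable. After the appropriate shift $l^n$ becomes a degree-zero map between projectives generated in the same degree, so by Lemma~\ref{nice-map-between-projectives} its cokernel is a direct summand of $P_N^n$, hence either $0$ or $P_N^n$ itself; the latter would force $l^n=0$, contradicting the previous step. Hence $l^n$ is an epimorphism. Applying Proposition~\ref{successive--> epis}, whose proof carries over verbatim to liftings arising from $\Delta(M,N)$ once one reads the ``no syzygy has a nonzero projective summand'' hypothesis as applying to the target module $N$ (which is exactly (i)), we obtain that $l^0,l^1,\dots,l^n$ are all epimorphisms. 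Since $0\leqslant m-i<n$, the map $l^{m-i}\colon P_M^m\twoheadrightarrow P_N^{m-i}$ is in particular onto.

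Passing to tops, the surjection $l^{m-i}$ induces a surjection of $R_0$-modules of dimensions $\beta_m(M)=1$ and $\beta_{m-i}(N)$, respectively, forcing $\beta_{m-i}(N)\leqslant 1$. Since $\beta_n(N)=1$, the module $N$ has projective dimension at least $n>m-i$, so $P_N^{m-i}\neq 0$, and therefore $\beta_{m-i}(N)=1$. The only point requiring a moment's thought is that Lemma~\ref{successive liftings}(2) and Proposition~\ref{successive--> epis} extend from the self-Ext setting $\Delta_M$ to $\Delta(M,N)$; in both arguments the domain module of the liftings plays no role beyond providing the commutative lifting diagram, while the syzygy/radical hypothesis must be imposed on the target module of $\eta$, so both extensions are routine.
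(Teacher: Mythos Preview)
Your argument is correct and follows the same route as the paper's proof: represent $\eta$ and its $R_0$-liftings, use $\theta\eta\neq 0$ to see $l^n\neq 0$, deduce surjectivity of $l^n$ from $\beta_n(N)=1$, propagate surjectivity backwards via the analogue of Proposition~\ref{successive--> epis}, and read off $\beta_{m-i}(N)=1$ from the surjection $P_M^m\twoheadrightarrow P_N^{m-i}$ together with $\beta_m(M)=1$. Your added remarks about extending Lemma~\ref{successive liftings}(2) and Proposition~\ref{successive--> epis} to the two-module setting, and about ruling out $\beta_{m-i}(N)=0$, are more explicit than the paper but not substantively different.
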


\begin{proof}
Let
\[P^\bullet_M\colon \cdots\to P^1_M \to P^0_M\to M\to 0\]
and
\[P^\bullet_N\colon \cdots\to P^1_N \to P^0_N \to N\to 0\]
be linear projective resolutions of $M$ and $N$, respectively.
Suppose $\eta$ in $\Delta^i(M,N)$ is represented by $\eta \colon
P^i_M\to N$, and $\theta$ in $\Ext^n_R(N,L)$ is represented by
$\theta\colon P^n_N\to L$, where we assume that the Yoneda product
$\theta\eta$ is nonzero.  The element $\theta\eta$ can be represented
by the composition $P^{n+i}_M\xrightarrow{\ell^n(\eta)}
P^n_N\xrightarrow{\theta} L$.  Since the product $\theta\eta$ is
nonzero, the above composition of maps is nonzero, and in
particular, $\ell^n(\eta)$ is nonzero.  Since $\eta$ is in
$\Delta^i(M,N)$, all the liftings $\ell^j(\eta)$ are given by matrices
in $R_0$.  By assumption $\beta_n(N) = 1$, so we infer that
$\ell^n(\eta)$ is surjective.  By Proposition \ref{successive--> epis}
all the previous liftings $\ell^j(\eta)$ for $j=n-1,n-2,\ldots,1,0$
are also surjective. In particular $\ell^{m-i}(\eta)\colon P^m_M\to
P^{m-i}_N$ is surjective where $m-i = 0, 1, \ldots , n-1$.
Since $\beta_m(M)=1$, it follows that
$\beta_{m-i}(N)=1$.
\end{proof}
\noindent We have the following result:

\begin{thm}
Let $R = R_0 \oplus R_1 \oplus \cdots$ be an indecomposable Koszul algebra.  Assume that
 all the simple modules have infinite projective dimension and that no syzygy of a simple module has a nonzero projective
  direct summand.
Assume also, that for each simple
  module, $1$ occurs at least twice as a Betti number.
Then every simple module has a syzygy which is simple and periodic.
\end{thm}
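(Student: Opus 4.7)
The plan is to show, for each simple $R$-module $S$, that some syzygy $\Omega^M S$ is isomorphic (up to grading shift) to a simple module $T^*$, and then that $T^*$ is itself periodic by a second application of the same argument. The central device is a \emph{Betti-number-$1$ propagation} procedure.

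For each simple $T$, since $\beta_0(T) = 1$ automatically and $1$ occurs at least twice as a Betti number by hypothesis, there exists $n_T \geqslant 1$ with $\beta_{n_T}(T) = 1$; let $\sigma(T)$ be the unique simple with $P^{n_T}_T \cong e_{\sigma(T)} R(-n_T)$. The key propagation lemma I would establish is: for any linear $R$-module $L$ of infinite projective dimension, none of whose syzygies has a nonzero projective summand, if $\beta_m(L) = 1$ with the top of $P^m_L$ equal to a simple $T$, then $\beta_{m + n_T}(L) = 1$ with top $\sigma(T)$. Indeed, the unique (up to scalar) nonzero $\eta \in \Delta^m(L, T)$ has zeroth lifting $l^0 \colon e_T R(-m) \to e_T R$, a degree $-m$ map with $R_0$-entries between indecomposable rank-$1$ projectives sharing the primitive idempotent $e_T$, hence an isomorphism. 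By Proposition~\ref{successive monos}, whose proof adapts verbatim to the setting $\Delta^n(M, N)$ once one uses the no-projective-summand hypothesis on $N$ instead of $M$, every subsequent $l^k(\eta)$ is a monomorphism with $R_0$-entries. At $k = n_T$ the target $P^{n_T}_T$ has rank $1$, forcing $\beta_{m + n_T}(L) \leqslant 1$; combined with $\beta_j(L) \geqslant 1$ for all $j$ (from infinite projective dimension and the no-projective-summand hypothesis), this is an equality, so $l^{n_T}$ becomes an isomorphism of rank-$1$ projectives whose tops agree.

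Iterating from $m_0 = 0$, $T_0 = S$ produces integers $m_k$ and simples $T_k = \sigma^k(S)$ with $\beta_{m_k}(S) = 1$ and top $T_k$. Since the set of isomorphism classes of simple $R$-modules is finite, the sequence $(T_k)$ is eventually periodic: $T_{k_0 + p} = T_{k_0} =: T^*$ for some $k_0, p \geqslant 1$. Set $M = m_{k_0}$ and $N = m_{k_0 + p} - m_{k_0}$; then $\beta_M(S) = \beta_{M + N}(S) = 1$, both with top $T^*$. Viewing $M^* = \Omega^M S$ as a linear module generated in degree~$0$, it satisfies $\beta_0(M^*) = \beta_N(M^*) = 1$ with top $T^*$, and applying the propagation lemma to $T^*$ itself around the $\sigma$-cycle gives $\beta_N(T^*) = 1$ also with top $T^*$. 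Take $\eta = \pi \in \Delta^0(M^*, T^*)$ the canonical augmentation: now $l^0$ is an isomorphism of $e_{T^*} R$, each $l^k$ is a monomorphism, and at $k = N$ the map $l^N$ is a nonzero monomorphism with $R_0$-entries between two copies of $e_{T^*} R(-N)$, hence an isomorphism. A bimodule-level generalization of Proposition~\ref{successive--> epis} (with the no-projective-summand hypothesis placed on the target $T^*$, which is simple) then forces $l^0, l^1, \ldots, l^N$ all to be isomorphisms. The commutative square at $k = 0, 1$ with $l^0, l^1$ isomorphisms, together with the five-lemma, forces the induced map $\eta \colon M^* \to T^*$ itself to be an isomorphism, so $\Omega^M S \cong T^*(-M)$ is simple.

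The same argument applied to $T^*$ in place of $S$, with $M$ replaced by $N$, uses $\beta_N(T^*) = \beta_{2N}(T^*) = 1$ with top $T^*$ (the second from iterating propagation twice around the $\sigma$-cycle) to conclude $\Omega^N T^* \cong T^*$. Thus $T^*$ is periodic of period dividing $N$, and $\Omega^M S \cong T^*$ furnishes the required simple and periodic syzygy of $S$. The main obstacle in this plan is the bimodule generalization of Propositions~\ref{successive monos} and~\ref{successive--> epis} from the endomorphism setting $\Delta^n(M, M)$ to the mapping setting $\Delta^n(M, N)$; their proofs transfer essentially unchanged once the no-projective-summand hypothesis is placed on the target $N$, which in every application in this plan is a simple module.
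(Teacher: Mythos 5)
Your proposal is correct and follows essentially the same route as the paper's proof: represent a nonzero class in $\Delta^n(S,T)$ by a map with $R_0$-entries, use the $\Delta(M,N)$-versions of Propositions~\ref{successive monos} and~\ref{successive--> epis} to force the liftings to be isomorphisms, and iterate over the finitely many simple modules. The only difference is that the paper extracts slightly more from this same computation at each step: once $l^{n_T}$ is an isomorphism, Proposition~\ref{successive--> epis} already makes $l^0,\dots,l^{n_T}$ all isomorphisms, and the commutative square at $k=0,1$ then gives $\Omega^m L\cong T$ directly inside your propagation lemma, which makes the final bootstrap through $M^*=\Omega^M S$ unnecessary.
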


\begin{proof} (a) Fix a simple module $S$.  By assumption there exists
  some $n\geqslant 1$ such that $\beta_n(S)=1$. Let $T$ be a simple
  $R$-module such that $\Ext^n_R(S,T)\neq (0)$. Let
\[P^{\bullet}_S\colon \cdots\to P^1_S\to P^0_S\to S\to 0\]
and
\[P^{\bullet}_T\colon \cdots\to P^1_T\to P^0_T\to T\to 0\]
be linear projective resolutions of $S$ and $T$, respectively.  Let
$\eta$ be a nonzero element in $\Ext^n_R(S,T)$.  Note that
$\eta\in\Delta^n(S,T)$ since $T$ is a simple module. We can represent
$\eta$ as a homomorphism denoted again by $\eta\colon P^n_S\to T$, and
let $l^i\colon P^{n+i}_S\to P^i_T$ be the liftings of $\eta$ as
shown in the following diagram:
\[\xymatrix{
\cdots \ar[r]& P^{n+i}_S\ar[r]\ar[d]^{l^i} & \cdots \ar[r]& P^{n+1}_S\ar[r]\ar[d]^{l^1} & P^n_S\ar[dr]^\eta\ar[d]^{l^0} & \\
\cdots \ar[r]& P^i_T\ar[r] & \cdots \ar[r]& P^1_T\ar[r] & P^0_T\ar[r] & T
}\]
Since $\eta\in\Delta^n(S,T)$, we may assume that each lifting is given by
a matrix with entries in $R_0$. The lifting
$l^0$ is surjective, since $\eta$ is nonzero.  Since $\beta_n(S)=1$,
$l^0$ is also a monomorphism and therefore it is an isomorphism.  Then
it follows from Proposition~\ref{successive monos} that all the liftings
$l^j$ for $j \geqslant 1$ are monomorphisms (and thus nonzero).  By
assumption there exists some $m\geqslant 1$ such that the Betti number
$\beta_m(T)=1$.  Hence the lifting $l^m$ is surjective and therefore
all previous liftings $l^0, l^1, \dots, l^{m - 1}$ are
surjective from Proposition~\ref{successive--> epis}. Hence all the
liftings $l^0, l^1, \dots, l^{m - 1}, l^m$ are isomorphisms.  Since
$m\geqslant 1$, it follows that $\Omega^nS\cong T$.  The number of
simple modules is finite and the above is true for any simple module,
so we can conclude that all simple modules have a simple periodic
syzygy.
\end{proof}

\noindent It is possible to have an indecomposable, even a local, selfinjective algebra where all the simple modules are periodic, but their Betti numbers need not all equal 1. However, in the selfinjective {\it plus} Koszul case we have the following consequence of the previous theorem:

\begin{prop}
Let $R$ be a selfinjective Koszul algebra where all the simple
modules are periodic. Then:
\begin{enumerate}
\item[(a)] all the Betti numbers for the simple modules are $1$.
\item[(b)] $R\cong \Bbbk\widetilde{\mathbb{A}}_m/J^2$ for some
  $m\geqslant 1$, where $\widetilde{\mathbb{A}}$ has circular
  orientation and $J$ is the ideal generated by the arrows.  In
  particular, all simple modules have the same period.
\end{enumerate}
\end{prop}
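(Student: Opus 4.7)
The plan is to prove (a) via a two-case analysis of the sequence of simple syzygies, and then to read off (b) from the resulting Loewy-length-two structure. I will first reduce to the case where $R$ is indecomposable and $R\ne\Bbbk$, so that every simple $R$-module is non-projective of infinite projective dimension and, by selfinjectivity, no syzygy of a simple has a nonzero projective direct summand. Since each simple $S$ is periodic of some period $p_S$, one has $\beta_0(S)=\beta_{p_S}(S)=1$, so Betti number $1$ occurs at least twice in every Betti sequence and the previous theorem applies. Therefore, for each simple $S$ the set $\{n\geq 1:\Omega^nS\text{ is simple}\}$ is nonempty; let $k_S$ be its minimum and set $\sigma(S)=\Omega^{k_S}S$. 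Using the minimality of $k_S$ and the fact that $\Omega$ is an auto-equivalence of the stable category, $\sigma$ will be a permutation of the (isomorphism classes of) simple $R$-modules.

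The main step is to prove $k_S=1$ for every simple $S$. Suppose for contradiction that $k_S\geq 2$, and write $\Omega^{k_S}S\cong S_{j_0}$. The projective cover of $\Omega^{k_S-1}S$ gives the short exact sequence
\[
0\to S_{j_0}\to P^{k_S-1}_S\to \Omega^{k_S-1}S\to 0,
\]
exhibiting $S_{j_0}$ as a simple submodule of $P^{k_S-1}_S$. Split $P^{k_S-1}_S=P_T\oplus P_{T^c}$, where $P_T$ is the sum of those indecomposable direct summands onto which the embedding of $S_{j_0}$ projects nontrivially; then $\Omega^{k_S-1}S\cong (P_T/S_{j_0})\oplus P_{T^c}$, and the absence of projective summands in $\Omega^{k_S-1}S$ forces $P_{T^c}=0$. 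Comparing the internal degree of the embedded $S_{j_0}$ with the socle degree of each summand of $P_T$ (using the Koszul grading and selfinjectivity) forces every summand of $P_T$ to be a shifted copy of one indecomposable projective $P_{j^*}$ of Loewy length $2$, with $\mathrm{soc}(P_{j^*})\cong S_{j_0}$. If $P_T$ is a single copy of such a shift, then $\beta_{k_S-1}(S)=1$, and applying the previous theorem at $n=k_S-1$ yields $\Omega^{k_S-1}S$ simple, contradicting the minimality of $k_S$. If instead $P_T$ contains at least two copies, I will use that $\Omega S_{j^*}=\mathfrak{r} P_{j^*}=\mathrm{soc}(P_{j^*})\cong S_{j_0}$ is simple, so $\sigma(S_{j^*})=S_{j_0}=\sigma(S)$; injectivity of $\sigma$ then forces $S\cong S_{j^*}$ and hence $k_S=1$, a contradiction.

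Once $k_S=1$ for every simple $S$, iterating gives $\beta_n(S)=1$ for all $n\geq 0$, which is (a). For (b), the simplicity of $\mathfrak{r} P_S=\Omega S$ for every simple $S$ forces each indecomposable projective to have Loewy length $2$, whence $\mathfrak{r}^2=0$ and $R\cong \Bbbk Q/J^2$ for the Gabriel quiver $Q$. The equality $\beta_1(S_i)=1$ gives out-degree $1$ at every vertex, and the fact that each indecomposable projective has a simple socle (selfinjectivity) gives in-degree $1$; hence $Q$ is a disjoint union of oriented cycles, and indecomposability of $R$ leaves $Q=\widetilde{\mathbb{A}}_m$ with circular orientation. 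The relation $\Omega S_i=S_{\tau(i)}$ for the cyclic shift $\tau$ then shows that all simples share the same period $m$. The principal obstacle is the diagonal case in the main step: when the simple $\Omega^{k_S}S$ lies diagonally across several isomorphic indecomposable projective summands of $P^{k_S-1}_S$, the single-summand analysis breaks down and one must exploit the injectivity of $\sigma$ (together with the Loewy-length-two structure of $P_{j^*}$) to close the argument.
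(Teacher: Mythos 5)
Your proof is correct, but your argument for part (a) takes a genuinely different route from the paper's. The paper fixes $n\geqslant 1$ minimal with $\Omega^nS\cong S$ for all simples, lets $\eta\in\Ext_R^n(S,S)$ be the non-nilpotent period-$n$ extension, and proves by induction on Loewy length that $\eta^t\cdot\Ext_R^{n-i}(T,S)\neq 0$ for some $t\geqslant 1$ and some simple $T$; they then feed this into the machinery of Proposition~\ref{prop:comparison_betti_numbers} to conclude $\beta_i(S)=1$ directly. You instead take the ``simple syzygy'' output of the preceding theorem as your primitive, define $k_S$ as the first time a syzygy of $S$ becomes simple and $\sigma(S)=\Omega^{k_S}S$, and exploit the graded socle structure that selfinjectivity plus Koszulity impose on the projective cover $P_S^{k_S-1}$: the embedded socle $\Omega^{k_S}S$ sits in the single graded slot directly above the generators, forcing Loewy length two for every relevant indecomposable projective, hence $\Omega S_{j^*}\cong\Omega^{k_S}S$ for its top $S_{j^*}$, hence $\Omega^{k_S-1}S\cong S_{j^*}$ simple and $k_S=1$. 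This is more elementary and avoids the paper's Ext-algebra computation, at the price of leaning harder on selfinjectivity (simple socles, projectives $=$ injectives, socle degree $=$ Loewy length $-1$). Two small remarks. (i) In your ``single summand'' subcase you invoke the previous theorem at $n=k_S-1$ to conclude $\Omega^{k_S-1}S$ is simple; strictly, that theorem only asserts the \emph{existence} of some simple syzygy, not that the one in degree $k_S-1$ is simple --- you are really invoking its proof (equivalently Proposition~\ref{prop:comparison_betti_numbers}). This is harmless, because your ``diagonal'' argument already handles the single-summand case: once $P_T\neq 0$ has all summands of Loewy length two with socle $S_{j_0}$, one gets $\Omega S_{j^*}\cong S_{j_0}\cong\Omega^{k_S}S$, so $\Omega^{k_S-1}S\cong S_{j^*}$ is simple and $k_S=1$; the case split is unnecessary. (ii) In (b), ``simple socle gives in-degree $1$'' is compressed: what you actually use is that selfinjectivity makes $j\mapsto\operatorname{soc}P_j$ a bijection on simples, and with out-degree $1$ this socle is $S_{t(j)}$ for the target $t(j)$ of the unique arrow out of $j$, so $t$ is a bijection. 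The paper's (b) reads $\widetilde{\mathbb A}_m$ off from $\beta_1=\beta_2=1$, quadratic relations, indecomposability and infinite projective dimension; your route through radical-square-zero and the Nakayama permutation is essentially equivalent but a bit more explicit.
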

\begin{proof}
(a) Let $n\geqslant 1$ be minimal such that $\Omega^nS\cong S$
for all simple modules $S$.  Let
\[\eta\colon 0\to S\to P^{n-1}_S\to \cdots \to P^1_S\to P^0_S\to
S\to 0\]
be exact with $P^i_S$ projective.  Then $\eta$ is non-nilpotent in
$\Ext^*_R(S,S)$, and we have
\begin{multline}
0\neq \eta^{t+1} = \eta^t\cdot
\underbrace{(0\to S\to P_S^{n-1}\to \cdots \to P_S^{n-i}\to \Omega^{n-i}S\to 0)}_{\theta_{n-i}}\cdot \notag\\
\underbrace{(0\to \Omega^{n-i}S\to P_S^{n-i-1}\to \cdots \to P_S^0\to S\to0)}_{\theta_i'}\notag
\end{multline}
for all $t\geqslant 0$.  In particular, $\eta^t\cdot \theta_{n-i}\neq
0$ for all $t\geqslant 1$ and for all $1\leqslant i\leqslant n-1$.  We
want to show $\eta^t\cdot \Ext^{n-i}_R(R_0,S)\neq (0)$ for some
$t\geqslant 1$.  So suppose for contradiction that $\eta^t\cdot
\Ext^{n-i}_R(R_0,S) = (0)$ for all $t\geqslant 1$.  Then we claim the
following: $\eta^u\cdot \Ext^{n-i}_R(B,S)=(0)$ for all modules $B$ of
Loewy length at most $u$.  For $u=1$ it is true by assumption.  Assume
the claim is true for $u$, and let $B$ be a module of Loewy length
$u+1$. Consider the exact sequence
\[0\to B\rrad \to B\to B/B\rrad \to 0.\]
This gives rise to the exact sequences
\[\Ext^{jn-i}_R(B/B\rrad ,S)\xrightarrow{\delta_j} \Ext^{jn-i}_R(B,S)\xrightarrow{\epsilon_j}
\Ext^{jn-i}_R(B\rrad ,S)\]
for $j\geqslant 1$.  Let $\nu$ be an element in $\Ext^{n-i}_R(B,S)$.
Then by assumption $\epsilon_{u+1}(\eta^u\cdot\nu) =\eta^u\cdot
\epsilon_1(\nu)$, so that $\eta^u\cdot \nu = \delta_{u+1}(\mu)$ for
some $\mu$ in $\Ext^{(u+1)n-i}_R(B/B\rrad ,S)\cong \Ext^{n-i}_R(B/B\rrad ,S)$,
since $\Omega^n(B/B\rrad)\cong B/B\rrad$.  Hence
$\eta\cdot\mu = 0$ and therefore $\eta^{u+1}\cdot \nu = 0$.  This
completes the proof of the above claim.  Hence, if $\eta^t\cdot
\Ext^{n-i}_R(R_0,S) = (0)$ for all $t\geqslant 1$, then
$\eta^N\cdot\Ext^{n-i}_R(X,S)=(0)$ for all finitely generated
$R$-modules $X$ where $N$ is the Loewy length of $R$. However, this is not true for
$X=\Omega_R^{n-i}S$, so this gives a contradiction.  Consequently,
given $i$ with $1\leqslant i\leqslant n-1$, there exists $t\geqslant
1$ and a simple module $T$ such that $\eta^t\cdot
\Ext^{n-i}_R(T,S)\neq (0)$.  Then we can recycle the arguments from
Proposition~\ref{prop:comparison_betti_numbers}, and we obtain that
$\beta_i(S) = 1$.  Since we can let $i$ vary freely in the interval
$[1,\ldots,n-1]$, we infer that all the Betti numbers for $S$ are $1$.
This completes the proof of the first part.

(b) Since $R_0=\Bbbk \times \cdots \times \Bbbk$ and $R$ is a Koszul algebra,
we have that $R\cong \Bbbk Q/I$ where
$Q$ is a finite quiver and $I$ is a quadratic admissible ideal in the
path algebra $\Bbbk Q$.  The fact that the two first Betti numbers are $1$
for all simple modules, implies that there is exactly one arrow
starting at each vertex in $Q$.  Since $R$ is indecomposable and all
simple modules have infinite projective dimension, the quiver $Q$ must
be $\widetilde{\mathbb{A}}_m$ for some $m\geqslant 1$ and there is a
relation starting at each vertex.  Since all relations
are quadratic, they are generated by all paths of length
$2$.  Consequently, $R\cong \Bbbk\widetilde{\mathbb{A}}_m/J^2$ for some
integer $m\geqslant 1$, where $J$ is the ideal generated by the
arrows.  It is easy to see that all the simple modules have period $m$ in
this case.
\end{proof}

%%%%%%%%%%%---------------------------%%%%%%%%%%%%%
%%%%%%%%%%%----- Bibliography --------%%%%%%%%%%%%%
%%%%%%%%%%%---------------------------%%%%%%%%%%%%%

\end{document}